\newcommand{\K}{\mathcal K}
\newcommand{\W}{\mathcal W}
\newcommand{\Y}{\mathcal Y}
\newcommand{\Dr}{\mathbb A_{\mathrm{ref}}}
\newcommand{\Da}{\mathbb A_{\mathrm{amb}}}
\newcommand{\acts}{\curvearrowright}
\numberwithin{equation}{section}
\theoremstyle{plain}
\newtheorem{theorem}{Theorem}[section]
\newtheorem*{mainthm*}{Main Theorem}
\newtheorem{lem}[theorem]{Lemma}
\newtheorem{prop}[theorem]{Proposition}
\newtheorem{thm}[theorem]{Theorem}
\newtheorem{cor}[theorem]{Corollary}
\theoremstyle{definition}
\newtheorem{defn}[theorem]{Definition}
\newtheorem{remark}[theorem]{Remark}
\begin{document}
	
\title{A step towards Twist Conjecture}
\author[J.~Huang]{Jingyin Huang}
\address{Math. \& Stats.\\
                    McGill University \\
                    Montreal, Quebec, Canada H3A 0B9}
\email{huangjingyin@gmail.com}
\author[P.~Przytycki]{Piotr Przytycki$^{\dag}$}
\address{Math. \& Stats.\\
                    McGill University \\
                    Montreal, Quebec, Canada H3A 0B9}
\email{piotr.przytycki@mcgill.ca}
\thanks{$\dag$ Partially supported by NSERC, FRQNT, and UMO-2015/\-18/\-M/\-ST1/\-00050.}

\maketitle

\begin{abstract}
\noindent Under the assumption that a defining graph of a Coxeter group admits only twists in $\mathbb{Z}_2$ and is of type FC, we 
prove M\"uhlherr's Twist Conjecture.
\end{abstract}
	
\setcounter{tocdepth}{2}

\section{Introduction}
A \emph{Coxeter generating set} $S$ of a group $W$ is a set such that $(W,S)$ is a Coxeter system. This means that $S$ generates $W$ subject only to relations of the form $s^2=1$ for $s\in S$ and $(st)^{m_{st}}=1$, where $m_{st}=m_{ts}\geq 2$ for $s\neq t\in S$ (possibly there is no relation between $s$ and $t$, and then we put by convention $m_{st}=\infty$).  
An
\emph{$S$-reflection} (or a \emph{reflection}, if the dependence
on $S$ does not need to be emphasised) is an
element of $W$ conjugate to some element of $S$. 
We say that $S$ is \emph{reflection-compatible} with another Coxeter generating set $S'$ if every $S$-reflection is an $S'$-reflection. Furthermore, $S$ is \emph{angle-compatible} with $S'$ if for every $s, t \in S$ with $\langle s,t \rangle$ finite, the set $\{s,t\}$ is conjugate to some
$\{s', t'\} \subset S'$. (Setting $s=t$ shows that angle-compatible implies reflection-compatible.)

M\"uhlherr's Twist Conjecture predicts that angle-compatible Coxeter generating sets of a Coxeter group differ by a sequence of elementary twists. We postpone the definition of an elementary twist to give a brief historical background. For an exhaustive 2006 state of affairs, see \cite{M}.

The Isomorphism Problem for Coxeter groups asks for an algorithm to determine if Coxeter systems $(W,S),(W',S')$ defined by $m_{st},m'_{st}$ give rise to isomorphic groups $W$ and $W'$. Hence listing all Coxeter generating sets $S$ of $W'$ solves the Isomorphism Problem. The articles of Howlett and M\"uhlherr \cite{HM}, and Marquis and M\"uhlherr \cite{MM} reduce the question of listing all such sets $S$ to the problem of listing all $S$ angle-compatible with $S'$. In this way the Twist Conjecture describes a possible solution to the Isomorphism Problem for Coxeter groups.

The first substantial work on the Twist Conjecture is the one by Charney and Davis \cite{CD}, where they prove that if a group acts effectively, properly, and cocompactly on a contractible manifold, then all its Coxeter generating sets are conjugate. Caprace and 
M\"uhlherr \cite{caprace2007reflection} proved that for all $m_{st}<\infty$, a Coxeter generating set $S$ angle-compatible with $S'$ is conjugate to $S'$. This is what was predicted by the Twist Conjecture, since $S$ with all $m_{st}<\infty$ does not admit any elementary twist. Building on that, Caprace and Przytycki \cite{caprace2010twist} proved that an arbitrary $S$ not admitting any elementary twist, and angle-compatible with $S'$, is in fact conjugate to~$S'$. This should be considered as the ``base case'' of the Twist Conjecture.

In a foundational article \cite{muhlherr2002rigidity} M\"uhlherr and Weidmann verified the Twist Conjecture in the case where all  $m_{st}\geq 3$. In that case there occur twists in $\mathbb{Z}_2$ as well as in dihedral groups. Ratcliffe and Tschantz proved the Twist Conjecture for chordal Coxeter groups \cite{RT}.  In these papers the assumptions on $m_{st}$ seem an artefact of the proposed proof. In our paper, we propose the following ``step one'' of a systematic approach towards Twist Conjecture. Our first assumption below is natural from the point of view of the statement of the conjecture, since it says that the occurring elementary twists are as simple as possible. Our second assumption is that $S$ is of \emph{type} FC meaning that 
for any $T\subseteq S$ with $m_{tr}$ finite for all $t,r\in T$, we have that $\langle T \rangle$ is finite. This assumption seems less natural from the point of view of the conjecture statement, but plays a role already in our proof of the ``base case''~\cite{caprace2010twist}.

\begin{mainthm*}
Let $S$ be a Coxeter generating set angle-compatible with $S'$. Suppose that $S$ admits only twists in~$\mathbb{Z}_2$, and is of type $\mathrm{FC}$. Then $S'$ is obtained from~$S$ by a sequence of elementary twists and a conjugation.
\end{mainthm*}

We finally define an elementary twist. Let $(W,S)$ be a Coxeter system. Given a subset $J
\subseteq S$, we denote by $W_J$ the subgroup of $W$ generated by $J$.
We call $J$ \emph{spherical} if $W_J$ is finite. If $J$ is
spherical, let $w_J$ denote the longest element of~$W_J$. We say
that two elements $s\neq t\in S$ are \emph{adjacent} if $\{s,t\}$ is
spherical. This gives rise to a graph whose vertices are $S$ and whose edges (labelled by $m_{st}$) correspond to adjacent pairs of $S$. This graph is called the \emph{defining graph} of $S$. Occasionally, when all $m_{st}$ are finite, we will use another graph, whose vertices are still $S$, but (labelled) edges correspond to pairs of non-commuting elements of $S$. This graph is called the \emph{Coxeter--Dynkin diagram} of $S$. Whenever we talk about adjacency of elements of $S$, we always mean adjacency in the defining graph unless otherwise specified.

Given a subset $J \subseteq S$, we denote by $J^\bot$ the
set of those elements of $S\setminus J$ that commute with $J$. A
subset $J\subseteq S$ is \emph{irreducible} if it is not contained in 
$K\cup K^\bot$ for some non-empty proper subset $K\subset J$.

Let $J\subseteq S$ be an irreducible spherical subset. We say that  $C\subseteq S\setminus (J\cup J^\bot)$ is a \emph{component},
if the subgraph induced on $C$ in the defining graph of $S$ is a connected component of the subgraph induced on $S\setminus (J\cup J^\bot)$. Assume that we have a nontrivial partition
$S\setminus (J\cup J^\bot)=A\sqcup B$, where each component $C$ is contained entirely in $A$ or in $B$. In other words, for all $a \in A$ and $b \in
B$, we have that $a$ and $b$ are non-adjacent. We then say that $J$ \emph{weakly separates} $S$. In the language of groups, this means that $W$ splits as an amalgamated product over
$W_{J \cup J^\bot}$. Note that $A$ and $B$ are in general not
uniquely determined by $J$. 

We then consider the map $\tau \colon S \to W$ defined by
$$
\tau(s)= \left\{\begin{array}{ll}
s & \text{for } s \in A\cup J\cup J^\perp,\\
w_J s w_J^{-1} & \text{for } s \in B,
\end{array}\right.
$$
which is called an \emph{elementary twist in $\langle J \rangle$} (see \cite[Def 4.4]{brady2002rigidity}). 

Coxeter generating sets $S$ and $S'$ of $W$ are \emph{twist equivalent} if $S'$ can be obtained from $S$ by a finite sequence of elementary twists and a conjugation. We say that $S$ is \emph{$k$-rigid} if for each weakly separating $J\subset S$ we have $|J|<k$. Thus $1$-rigid means that there are no elementary twists (this was called \emph{twist-rigid} in \cite{caprace2010twist}). Our Main Theorem states that if a Coxeter generating set $S$ is $2$-rigid, of type FC, and angle-compatible to $S'$, then it is twist equivalent to $S'$. Since twists in $\mathbb{Z}_2$ do not change the defining graph, it follows that $S$ and $S'$ have the same defining graphs. Note that right-angled Coxeter groups are $2$-rigid, and that the Isomorphism Problem for these groups was solved by Radcliffe \cite{R}.

\medskip

\noindent \textbf{Organisation.} 
In the entire article (except for Lemma~\ref{lem:spherical}) we assume that \textbf{$S$ is irreducible, non-spherical, and of type FC}. (The reducible case easily follows from the irreducible.)

In Section~\ref{sec:prem} we recall some basic properties of the Davis complex, and several notions and results from \cite{caprace2010twist}. In Section~\ref{sec:compatibility} we extend in two different ways a marking compatibility result from \cite{caprace2010twist}. Section~\ref{subsec:relative position} contains a technical result required for the definition of complexity used in the proof of the Main Theorem in Section~\ref{sec:proof}.

\medskip

\noindent \textbf{Aknowledgements.} We thank Pierre-Emmanuel Caprace, with whom we designed the strategy carried out in the paper.

\section{Preliminaries}
\label{sec:prem}

\subsection{Davis complex}

Let $\mathbb A$ be the \emph{Davis complex} of a Coxeter system $(W,S)$. The 1-skeleton of $\mathbb A$ is the Cayley graph of $(W,S)$ with vertex set ~$W$ and a single edge spanned on $\{w,ws\}$ for each $w\in W, s\in S$. Higher dimensional cells of $\mathbb A$ are spanned on left cosets in $W$ of remaining finite $W_J$. The left action of $W$ on itself extends to the action on $\mathbb A$.

A \emph{chamber} is a vertex of $\mathbb A$. Collections of chambers corresponding to cosets $wW_J$ are called $J$-\emph{residues} of $\mathbb A$.  A \emph{gallery} is an edge-path in $\mathbb A$. For two chambers $c_1,c_2\in\mathbb A$, we define their \emph{gallery distance}, denoted by $d(c_1,c_2)$, to be the length of a shortest gallery from $c_1$ to $c_2$ (this coincides with the word-metric w.r.t.\ $S$). 

Let $r\in W$ be an $S$-reflection. The fixed point set of the action of $r$ on $\mathbb A$ is called its \emph{wall} $\W_r$. The wall $\W_r$ determines $r$ uniquely. Moreover, $\W_r$ separates~$\mathbb A$ into two connected components, which are called \emph{half-spaces (for $r$)}. 
If a non-empty $K\subset\mathbb A$ is contained in a single half-space (this happens for example if $K$ is connected and disjoint from $\W_r$), then $\Phi(\W_r,K)$ denotes this half-space. An edge of $\mathbb A$ crossed by $\W_r$ is \emph{dual} to~$\W_r$. A chamber is \emph{incident} to $\W_r$ if it is an endpoint of an edge dual to $\W_r$. The \emph{distance} of a chamber $c$ to~$\W_r$, denoted by $d(c,\W_r)$, is the minimal gallery distance from $c$ to a chamber incident to $\W_r$.

The following fact is standard, see eg.\ \cite[Thm 2.9]{building}.
   
\begin{thm}
	\label{lem:residue}
Let $\mathcal R$ be a residue and let $x\in \mathcal R$ and $y\in W$ be chambers.
Then there is a chamber $x'\in\mathcal R$ on a minimal length gallery from $y$ to $x$ such that $\Phi(\W_r, y)=\Phi(\W_r,x')$ for any reflection $r$ fixing $\mathcal R$.
\end{thm}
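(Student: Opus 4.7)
The plan is to let $x'$ be the \emph{projection} (or gate) of $y$ onto $\mathcal R$, i.e., any chamber of $\mathcal R$ minimising $d(y,\cdot)$, and then verify the two required properties separately.

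First I would establish that for this $x'$ we have $d(y,z)=d(y,x')+d(x',z)$ for every $z\in\mathcal R$. This is the classical gate property of residues in a Coxeter complex, which can either be quoted from a standard buildings reference or obtained quickly by induction on $d(y,x')$: take an $S$-reflection $s$ with $x'$ and its $s$-neighbour $z_0$ in $\mathcal R$, and observe that if $x'$ were not between $y$ and $z_0$ then one could find a chamber of $\mathcal R$ closer to $y$ than $x'$, contradicting the choice of $x'$. Applied to $z=x$, the identity $d(y,x)=d(y,x')+d(x',x)$ lets us concatenate a minimal gallery from $y$ to $x'$ with one from $x'$ to $x$, giving a minimal gallery from $y$ to $x$ through $x'$.

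Next I would verify the wall condition. Suppose, for contradiction, that some reflection $r$ fixing $\mathcal R$ satisfies $\Phi(\W_r,y)\ne\Phi(\W_r,x')$; that is, $\W_r$ separates $y$ from $x'$. Set $x'':=r(x')$. Since $r$ stabilises $\mathcal R$ setwise, $x''\in\mathcal R$; since $x'$ and $x''$ are joined by an edge dual to $\W_r$, the standard half-space fact for Coxeter complexes (a minimal gallery between chambers in opposite half-spaces of $\W_r$ crosses $\W_r$ exactly once, and removing that crossing yields a gallery that is one step shorter after reflecting) gives $d(y,x'')=d(y,x')-1$. This contradicts the defining property of $x'$ as a closest chamber of $\mathcal R$ to $y$. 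Hence $\Phi(\W_r,y)=\Phi(\W_r,x')$ for every reflection $r$ stabilising $\mathcal R$.

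There is no genuine obstacle here beyond invoking the gate/projection theorem in its standard form; both required properties follow from the single definition of $x'$ as the closest chamber of $\mathcal R$ to $y$. In a formal write-up I would simply cite \cite{building} for the gate property and for the half-space fact used in the last step, and state the two deductions above.
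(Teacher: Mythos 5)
Your proof is correct and is essentially the standard gate/projection argument for residues that the paper invokes by citation (to \cite{building}) rather than proving. One small inaccuracy: $x'$ and $x''=r(x')$ need not be joined by an edge dual to $\W_r$ (that would require $x'$ to be incident to $\W_r$), but this clause is not needed --- the folding argument you quote in the parenthesis already yields $d(y,x'')\le d(y,x')-1$ whenever $\W_r$ separates $y$ from $x'$, which is all the contradiction with the minimality of $x'$ requires.
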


\subsection{Bases and markings}
\label{subsec:bases and markings}
In this section we recall, in simplified form, several central notions from \cite{caprace2010twist}. 
Let $(W,S)$ be a Coxeter system. Let $\Dr$ be the Davis complex for $(W,S)$ (``ref'' stands for ``reference complex''). For each reflection $r$, let~$\Y_r$ be its wall in $\Dr$. 
The following was called \emph{simple base with spherical support} in \cite{caprace2010twist}.

\begin{defn}\cite[Def 3.1 and 3.6]{caprace2010twist}
	\label{domain} A \emph{base} is a pair
	$(s,w)$ with \emph{core} $s\in S$ and $w\in W$ satisfying
	\begin{enumerate}[(i)]
	\item $w=j_1\cdots j_n$ where $j_i$ are pairwise distinct elements from $S\setminus\{s\}$,
		\item
		$d(w.c_0,\mathcal{Y}_{s})=n$,
		\item
		every wall that separates $w.c_0$ from $c_0$ intersects $\mathcal{Y}_s$, and
		\item
		the \emph{support} $J=\{s,j_1,\ldots,j_n\}$ is spherical.
	\end{enumerate}
	\end{defn}

In \cite[Lem 3.7]{caprace2010twist} and the paragraph preceding it, we established the following.

\begin{remark}
	\label{simple}
	\begin{enumerate}[(i)]
	\item
	If $J\subset S$ is irreducible spherical and $s\in J$, then there
	exists a base with support $J$ and core $s$. Namely, it
	suffices to order the elements of $J\setminus \{s\}$ into a sequence
	$(j_i)$ so that for every $1\leq i\leq n$ the set $\{s, j_1,\ldots,
	j_i\}$ is irreducible. Then $(s,j_1\ldots j_n)$ is a base.
     \item
     The core $s$ and support $J$ determine the base $(s,w)$ uniquely.  Hence we sometimes write a base as $(s,J)$, or even just $J$ if the core is understood. When $J=\{s\}$, we often write $s$ instead of $\{s\}$ for simplicity. 
     \end{enumerate}
     \end{remark}

\begin{defn}
\label{marking} A \emph{marking} is a pair $\mu=((s,J),m)$, where $(s,J)$ is a base and where the \emph{marker} $m\in S$ is not adjacent to some element of $J$. The \emph{core} and the \emph{support} of the marking $\mu$ are the core and the support of its base. 
\end{defn}

Our marking satisfies (but is not equivalent to the marking defined by) \cite[Def~3.8]{caprace2010twist}. To see that, note that by \cite[Rem 3.12]{caprace2010twist}, we have that $w\mathcal Y_m$ is disjoint from $\mathcal Y_s$. 

\begin{remark}
	\label{rem:find markings}
Let $(s,J)$ be a base and $m\in S\setminus (J\cup J^{\perp})$. If $J\cup\{m\}$ is not spherical, then since $S$ is of type FC, the pair $((s,J),m)$ is a marking. In particular, since $S$ is irreducible non-spherical, we have that for each $s\in S$ there exists a marking with core $s$, since we can start with $J\subset S$ maximal irreducible spherical containing~$s$. Similarly, for each $s\in I\subset S$ with $I$ irreducible spherical, there exists a marking with core $s$ and support containing $I$.
\end{remark}

Now suppose that $S$ is reflection-compatible with another Coxeter generating set $S'$. Let $\Da$ be the Davis complex for $(W,S')$ (``amb'' stands for ``ambient complex''). For each reflection $r$, let $\W_r$ be its wall in $\Da$. The following picks up the geometry of the walls $\W_s$ for $s\in S$ inside the ambient complex for $S'$. 

\begin{defn}
	\label{half-space} Let $\mu=((s,w),m)$ be a marking. We define
	$\Phi_s^\mu=\Phi(\mathcal{W}_s, w\mathcal{W}_m)$, which is the half-space for $s$ in $\mathbb A_{\mathrm{amb}}$ containing
	$w\mathcal{W}_m$. 
\end{defn}

\subsection{Geometric set of reflections}
\label{sec:subs}
Let $S,S',W,\Dr$ and $\Da$ be as before, and assume that \textbf{$S$ is angle-compatible with $S'$}. Let $P\subseteq S$.

\begin{defn}
\label{def:geom}
Let $\{\Phi_p\}_{p\in P}$ be a collection of half-spaces of $\Da$ for $p\in P$. The collection $\{\Phi_p\}_{p\in P}$ is \emph{$2$-geometric} if for any pair $p,r\in P$, the set $\Phi_{p}\cap\Phi_{r}\cap \Da^{(0)}$ is a fundamental domain for the action of $\langle p,r\rangle$ on $\Da^{(0)}$. The set $P$ is \emph{$2$-geometric} if there exists a $2$-geometric collection of half-spaces $\{\Phi_p\}_{p\in P}$. \end{defn}

\begin{remark}
\label{geometric}
By \cite[Thm 4.2]{caprace2007reflection}, if $\{\Phi_p\}_{p\in P}$ is $2$-geometric, then after possibly replacing each $\Phi_p$ by opposite half-space, the collection $\{\Phi_p\}_{p\in P}$ is \emph{geometric}, meaning that $F=\bigcap_{p\in P}\Phi_p\cap\Da^{(0)}$ is nonempty. This justifies calling $2$-geometric $P$ \emph{geometric} for simplicity.
In fact, by \cite{Hee} (see also \cite[Thm~1.2]{HRT} and \cite[Fact 1.6]{caprace2007reflection}), if $P$ is geometric, then $F$ is a fundamental domain for the action of $\langle P \rangle$ on $\Da^{(0)}$, and for each $p\in P$ there is a chamber in $F$ incident to $\W_p$. In particular, if $P=S$, then $S$ is conjugate to $S'$. 
\end{remark}

Note that since $S$ is angle-compatible to $S'$, every 2-element subset of $S$ is geometric. However, this does not mean that $S$ is 2-geometric. 
Nevertheless, for $S$ spherical, it is easy to inductively choose 2-geometric $\Phi_s$, and by Remark~\ref{geometric} we obtain the following.

\begin{lem}
	\label{lem:spherical}
	If $S$ is spherical, then it is conjugate to $S'$.
\end{lem}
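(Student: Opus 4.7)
The plan is to construct a $2$-geometric family $\{\Phi_s\}_{s\in S}$ of half-spaces in $\Da$ and then apply Remark~\ref{geometric} to conclude that $S$ is conjugate to $S'$. Since $S$ is spherical, $W=\langle S\rangle$ is finite, so $\Da$ is a finite Coxeter polytope with a $W$-fixed centre $p_0$, and each wall $\W_s$, $s\in S$, passes through $p_0$.

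I would build $\{\Phi_s\}$ inductively on $|S|$. The base cases $|S|\leq 2$ are trivial or immediate from the definition of angle-compatibility. For the inductive step, assume a $2$-geometric family $\{\Phi_s\}_{s\in S_0}$ has already been chosen on some proper subset $S_0\subsetneq S$. By Remark~\ref{geometric}, after possibly flipping each $\Phi_s$, one may assume $F_0=\bigcap_{s\in S_0}\Phi_s\cap\Da^{(0)}$ is non-empty and hence a fundamental domain for $\langle S_0\rangle$. For $s^*\in S\setminus S_0$, I would choose a chamber $c_0\in F_0$ and set $\Phi_{s^*}=\Phi(\W_{s^*},c_0)$.

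Verifying the $2$-geometric condition for each new pair $\{s,s^*\}$, $s\in S_0$, reduces to a local rank-two dihedral analysis near $\W_s\cap\W_{s^*}$. Angle-compatibility of $\{s,s^*\}$ with $S'$ forces $\W_s$ and $\W_{s^*}$ to be adjacent walls of the $\langle s,s^*\rangle$-arrangement on $\Da$, so the wedge $\Phi_s\cap\Phi_{s^*}$ is a fundamental domain for $\langle s,s^*\rangle$ on $\Da^{(0)}$ exactly when $c_0$ lies in one of the two narrow sectors (of opening $\pi/m_{s,s^*}$) cut out between those walls, rather than in the complementary broad sector.

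The hard part will be the coherent choice of $c_0$: it must lie in a narrow sector for every $s\in S_0$ simultaneously, i.e., occupy a chamber of the arrangement $\{\W_s : s\in S_0\cup\{s^*\}\}$ whose bounding walls are exactly those. This is essentially the classical rigidity of finite real reflection groups — that every Coxeter generating set of a finite Coxeter group arises as the simple system of some chamber of its reflection arrangement. I expect the resolution to exploit that $F_0$ is itself bounded by $\{\W_s\}_{s\in S_0}$, and that angle-compatibility of each pair $\{s,s^*\}$ pins down, relative to $\W_s$, the correct side of $\W_{s^*}$ on which the narrow sector sits, so that a single $c_0\in F_0$ satisfying all these side conditions can be found.
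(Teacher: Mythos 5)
Your overall strategy --- construct a $2$-geometric family and invoke Remark~\ref{geometric} --- is exactly the paper's (whose entire proof is the one-line remark preceding the lemma), but the inductive mechanism you chose has a real hole at its crux. Setting $\Phi_{s^*}=\Phi(\W_{s^*},c_0)$ for a chamber $c_0\in F_0$ makes the pair $\{\Phi_s,\Phi_{s^*}\}$ $2$-geometric precisely when $c_0$ lies in a narrow quadrant of the wall arrangement of $\langle s,s^*\rangle$, and you need this simultaneously for every $s\in S_0$ not commuting with $s^*$. You flag this as ``the hard part'' and then only say you ``expect'' it can be resolved: nothing in the proposal rules out that the subsets of $F_0$ that are narrow for the various $s\in S_0$ have empty common intersection. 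Moreover, producing a chamber that is narrow for all these pairs at once is essentially equivalent to producing a chamber of a common fundamental domain, i.e.\ to the conclusion of the lemma; and the ``classical rigidity of finite reflection groups'' you propose to invoke is precisely the statement under proof in this reflection-compatible setting, so the argument as written is circular.

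The gap disappears if you run the induction along the Coxeter--Dynkin diagram of $S$ rather than over arbitrary subsets. For spherical $S$ this diagram is a forest (a fact the paper uses elsewhere, e.g.\ in Lemma~\ref{lem:same side}), so you may order $S$ so that each element has at most one earlier neighbour in the diagram. A commuting pair imposes no constraint: all four quadrants of two crossing walls are fundamental domains for the corresponding $\Z_2\times\Z_2$. For a diagram edge $\{s_j,s_i\}$ with $s_j$ earlier, angle-compatibility makes the pair geometric, the two narrow quadrants are opposite, hence exactly one of them lies in the already chosen $\Phi_{s_j}$, and this determines $\Phi_{s_i}$. Each new generator thus receives at most one constraint, always satisfiable, so the full family is $2$-geometric and Remark~\ref{geometric} concludes; no chamber $c_0$ needs to be exhibited during the construction.
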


\begin{cor}
	\label{cor:spherical conjugate}
Let $J\subset S$ be spherical. Then $J$ is conjugate to a spherical $J'\subset S'$. In particular, $J$ is geometric, and if it is irreducible, there exist exactly 2 fundamental domains $F$ for the action of $\langle J \rangle$ on $\Da^{(0)}$ as in Remark~\ref{geometric}.
\end{cor}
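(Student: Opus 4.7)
The plan is to leverage classical results on finite Coxeter groups. Since $J$ is spherical, $W_J$ is finite; and by reflection-compatibility (implied by angle-compatibility) each element of $J$ is an $S'$-reflection, so $W_J$ is a finite reflection subgroup of $(W,S')$. By the classical theorem that finite reflection subgroups of Coxeter groups are conjugate to standard spherical parabolic subgroups, there exist $w\in W$ and spherical $J'\subset S'$ with $wW_Jw^{-1}=W_{J'}$. Then $wJw^{-1}$ and $J'$ are two Coxeter generating sets of the finite Coxeter group $W_{J'}$, and by the classical rigidity of finite Coxeter groups (or alternatively by applying Lemma~\ref{lem:spherical} inside $W_{J'}$ after verifying the local angle-compatibility using the ambient hypothesis and pair-by-pair matching of reflections), they are conjugate in $W_{J'}$. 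Combining gives $J$ conjugate to $J'$ in $W$.

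For geometricity, writing $J=vJ'v^{-1}$ with $v\in W$, define $\Phi_j:=v\,\Phi(\W_{j'},c_0)$ for $j=vj'v^{-1}$. The standard collection $\{\Phi(\W_{j'},c_0)\}_{j'\in J'}$ is 2-geometric: for any pair $\{j',k'\}\subset J'\subset S'$, Theorem~\ref{lem:residue} applied to the $\{j',k'\}$-residue through $c_0$ shows that $\Phi(\W_{j'},c_0)\cap\Phi(\W_{k'},c_0)\cap\Da^{(0)}$ is a fundamental domain for $\langle j',k'\rangle$ acting on $\Da^{(0)}$. Translating by $v$ preserves this, giving 2-geometricity of $\{\Phi_j\}_{j\in J}$.

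For the ``exactly 2 fundamental domains'' claim in the irreducible case, after conjugating assume $J\subset S'$. Given any 2-geometric collection $\{\Phi_j\}_{j\in J}$ and a pair $\{j,k\}\subset J$ with $m_{jk}\geq 3$, project chambers of $\Da$ to the $\{j,k\}$-residue via Theorem~\ref{lem:residue} and compute directly in the resulting dihedral arrangement: of the four possible orientations of $(\Phi_j,\Phi_k)$, only the two ``matching'' choices (both half-spaces containing $c_0$, or neither) yield a fundamental domain for $\langle j,k\rangle$, while the two ``mismatched'' choices yield unions of two such domains. For commuting pairs ($m_{jk}=2$) all four orientations give fundamental domains. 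Irreducibility of $J$ forces the subgraph on $J$ with edges for $m_{jk}\geq 3$ to be connected, so the pairwise consistency propagates across $J$ to leave exactly two admissible sign patterns (all-positive and all-negative with respect to $c_0$), giving exactly two distinct fundamental domains $F$.

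The main obstacle is the explicit dihedral computation showing that for $m\geq 3$ only matching orientations yield a fundamental domain; once verified, the rest of the proof reduces cleanly to Theorem~\ref{lem:residue} and classical rigidity of finite Coxeter groups.
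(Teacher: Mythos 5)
The first step of your argument rests on a ``classical theorem'' that does not exist: finite reflection subgroups of a Coxeter system $(W,S')$ are in general \emph{not} conjugate to standard parabolic subgroups. For instance, in the dihedral group $W=\langle s,t\mid s^2=t^2=(st)^6=1\rangle$ the subgroup $\langle s,tst\rangle$ is a reflection subgroup of order $6$, while the proper standard parabolics have order at most $2$. What is classically true (Tits) is only that every finite subgroup is \emph{contained in} a conjugate of a spherical standard parabolic. So the existence of $w$ and spherical $J'\subset S'$ with $wW_Jw^{-1}=W_{J'}$ --- which is essentially the content of the corollary --- cannot be obtained by such a citation. The paper instead enlarges $J$ to a \emph{maximal} spherical $P\subseteq S$, observes that $\langle P\rangle$ is then a maximal finite subgroup of $W$, and invokes \cite[Thm 1.9]{brady2002rigidity} to conjugate $\langle P\rangle$ onto $\langle P'\rangle$ for a maximal spherical $P'\subset S'$; it is this maximality that makes the matching of parabolics legitimate.

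Your second step is also unsound as primarily stated: finite Coxeter groups are not rigid (the dihedral group of order $12$ carries both the $I_2(6)$ and the $A_1\times A_2$ presentations), so ``classical rigidity of finite Coxeter groups'' cannot be cited. The correct tool is exactly Lemma~\ref{lem:spherical}, i.e.\ angle-compatibility, which you mention only parenthetically; and transferring angle-compatibility of $S$ with $S'$ in $W$ to angle-compatibility of $wJw^{-1}$ with $J'$ \emph{inside} $W_{J'}$ requires an argument, since the conjugating elements witnessing angle-compatibility need not lie in $W_{J'}$. The paper sidesteps both issues by reducing to $P=S$ and $P'=S'$, where Lemma~\ref{lem:spherical} applies verbatim with the ambient hypotheses. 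By contrast, your treatment of the remaining assertions --- geometricity via Theorem~\ref{lem:residue}, and the count of exactly two fundamental domains via the dihedral sign analysis propagated along the connected Coxeter--Dynkin diagram of an irreducible $J$ --- is essentially correct.
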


\begin{proof}
Let $P\subset S$ be maximal spherical containing $J$. Then $\langle P\rangle$ is a maximal finite subgroup of $W$. By \cite[Thm 1.9]{brady2002rigidity}, we have that $\langle P\rangle$ is conjugate to $\langle P'\rangle$ for a maximal spherical $P'\subset S'$. Thus we can assume without loss of generality that $P=S$ and $P'=S'$. It now suffices to apply Lemma~\ref{lem:spherical}.
\end{proof}

\begin{lem}
	\label{lem:spherical same side}
Let $J\subset S$ be irreducible spherical, and let $F$ be a fundamental domain for $\langle J\rangle$ in $\Da^{(0)}$ guaranteed by Corollary~\ref{cor:spherical conjugate}. Let $s\in J$ and define $w\in W$ via $(s,w)=(s,J)$. Then we have $\Phi(\W_s,F)=\Phi(\W_s,w.F)$.
\end{lem}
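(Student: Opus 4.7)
The plan is to extract from Definition~\ref{domain}(ii) the non-separation of $c_0$ and $w.c_0$ by $\Y_s$ in $\Dr$, and transfer this fact to $\Da$ via Corollary~\ref{cor:spherical conjugate} and Theorem~\ref{lem:residue}.

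To start, I would show that in $\Dr$ the wall $\Y_s$ does not separate $c_0$ from $w.c_0$. If it did, a minimal gallery from $c_0$ to $w.c_0$ could be chosen to start with the edge to $s.c_0$, giving $d(w.c_0, s.c_0) = n-1$ and hence $d(w.c_0, \Y_s) \leq n-1$, contradicting condition~(ii).

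Next, using Corollary~\ref{cor:spherical conjugate}, pick $g \in W$ with $J' := gJg^{-1} \subset S'$ and set $s' := gsg^{-1} \in J'$. Then $gF$ is a geometric fundamental domain for $\langle J'\rangle$ on $\Da^{(0)}$, and the claim reduces to $\Phi(\W_{s'}, gF) = \Phi(\W_{s'}, (gwg^{-1}).gF)$. Let $\mathcal R'$ be the $J'$-residue of $\Da$ through the identity, carrying the Davis-complex structure of $(\langle J'\rangle, J')$, and let $x_0 := gF \cap \mathcal R'$. A short combinatorial argument in the finite Coxeter group $(\langle J'\rangle, J')$ shows that $x_0 \in \{1, w_{J'}\}$: these are the only chambers of $\mathcal R'$ for which the wedge $\bigcap_{p' \in J'} \Phi(\W_{p'}, x_0) \cap W_{J'}$ is a singleton, a necessary condition for a geometric fundamental domain by Remark~\ref{geometric}. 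The projection $\pi : \Da^{(0)} \to \mathcal R'$ from Theorem~\ref{lem:residue} is $\langle J'\rangle$-equivariant, so $\pi^{-1}(x_0)$ is a fundamental domain contained in the same wedge as $gF$, giving $gF = \pi^{-1}(x_0)$. Applying Theorem~\ref{lem:residue} to $s'$ then reduces the claim to showing that $\W_{s'}$ does not separate $x_0$ from $(gwg^{-1}).x_0$.

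Both chambers lie in $\mathcal R'$, so the separation is detected inside the Davis complex of $(\langle J'\rangle, J')$ by the inversion-set formula. Transporting via the $g^{-1}$-conjugation isomorphism $(\langle J'\rangle, J') \cong (\langle J\rangle, J)$ and writing $v_0 := g^{-1} x_0 g \in \{1, w_J\}$, the condition becomes whether $v_0^{-1} s v_0$ is a left descent of $v_0^{-1} w v_0$ in $(\langle J\rangle, J)$. For $v_0 = 1$ this is exactly what was negated in the first step; for $v_0 = w_J$, since $w_J$-conjugation is an automorphism of $(\langle J\rangle, J)$ preserving $J$-length, the condition again reduces to the same inequality, also false. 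The main obstacle is the joint identification $gF = \pi^{-1}(x_0)$ with $x_0 \in \{1, w_{J'}\}$, which fuses the half-space description from Remark~\ref{geometric} with finite-Coxeter combinatorics on descent patterns in $W_{J'}$.
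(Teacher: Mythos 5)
Your argument is correct and follows essentially the same route as the paper: Definition~\ref{domain}(ii) yields that $\Y_s$ does not separate $c_0$ from $w.c_0$, Corollary~\ref{cor:spherical conjugate} transfers the statement to $\Da$ by conjugation, and the second fundamental domain (the $w_J$-side) is handled by the length-preserving $w_J$-conjugation automorphism, which is just a rephrasing of the paper's observation that both relevant half-spaces flip simultaneously. The intermediate identification $gF=\pi^{-1}(x_0)$ with $x_0\in\{1,w_{J'}\}$ is sound but heavier than needed, since Corollary~\ref{cor:spherical conjugate} already records that there are exactly two such fundamental domains.
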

\begin{proof}
First suppose $S=S'$. If $c_0\in F$, then by Definition~\ref{domain}(ii) we have $\Phi(\W_s,c_0)=\Phi(\W_s,w.c_0)$, as desired. Otherwise, we have $w_J.c_0\in F$. The half-spaces $\Phi(\W_s,w_J.c_0)$ and $\Phi(\W_s,ww_J.c_0)$ are opposite to $\Phi(\W_s,c_0)$ and $\Phi(\W_s,w.c_0)$, so they coincide as well. 

If $S\neq S'$, then by Corollary~\ref{cor:spherical conjugate} we have $gJg^{-1}=J'$, where $J'$ is a spherical subset of~$S'$. Then $(gsg^{-1},gwg^{-1})$ is a base for $S'$, and by the previous paragraph we have $\Phi(\W_{gsg^{-1}},g.F)=\Phi(\W_{gsg^{-1}},gw.F)$. Translating by $g^{-1}$ we obtain the statement in the lemma.
\end{proof}

The next result is essentially \cite[Prop 5.2]{caprace2010twist}. Except for Lemma~\ref{lem:spherical} this is the only place where we use angle-compatibility (instead of reflection-compatibility). Note that our markings are particular markings of \cite{caprace2010twist}, but the proof of \cite[Prop~5.2]{caprace2010twist} only uses such markings if $S$ is of type FC. 

\begin{prop} 
	\label{consistence}  Suppose that $P\subseteq S$ is irreducible and
	non-spherical.  Let $p_1,p_2\in P$. Suppose that for each $i=1,2$, any marking $\mu$ with core $p_i$ and support and marker in $P$ gives the same
	$\Phi_{p_i}=\Phi_{p_i}^{\mu}$. Then the pair $\{\Phi_{p_1}, \Phi_{p_2}\}$
	is geometric.
\end{prop}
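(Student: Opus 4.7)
\emph{Plan.} I would argue by cases on the relative position of $p_1, p_2$ in $S$, using the consistency hypothesis together with Lemma~\ref{lem:spherical same side} to pin down $\Phi_{p_i}$ via well-chosen markings. If $p_1, p_2$ commute (including $p_1 = p_2$), then $\langle p_1, p_2 \rangle$ is $\Z_2$ or $\Z_2 \times \Z_2$ and any pair $\{\Phi_{p_1}, \Phi_{p_2}\}$ is automatically geometric. If $p_1, p_2$ are non-adjacent, then $\mu := ((p_1, \{p_1\}), p_2)$ is a valid marking with support and marker in $P$, since the base $(p_1, \{p_1\})$ has trivial $w$ and $p_2 \in P$ is non-adjacent to $p_1$. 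By Definition~\ref{half-space} and the consistency hypothesis, $\Phi_{p_1} = \Phi(\W_{p_1}, \W_{p_2})$, which contains $\W_{p_2}$; symmetrically $\Phi_{p_2} \supseteq \W_{p_1}$. Since the walls $\W_{p_1}, \W_{p_2}$ are disjoint and consecutive in the infinite dihedral arrangement for $\langle p_1, p_2 \rangle$, the intersection $\Phi_{p_1} \cap \Phi_{p_2}$ is the central strip between them, which is a fundamental domain on $\Da^{(0)}$.

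The main case is $p_1 \neq p_2$ adjacent but non-commuting, so $J := \{p_1, p_2\}$ is irreducible spherical. By Corollary~\ref{cor:spherical conjugate}, $J$ is geometric and admits exactly two fundamental domains $F, w_J.F$; the goal is to rule out the two ``mixed'' pairs $(\Phi_{p_1}, \Phi_{p_2})$ not arising from a single fundamental domain. I would first produce a common marker: let $\hat J \subseteq P$ be a maximal irreducible spherical subset containing $J$. Since $P$ is non-spherical, $\hat J \subsetneq P$, so by the irreducibility of $P$ there exists $m \in P \setminus (\hat J \cup \hat J^\perp)$. By the FC hypothesis and maximality of $\hat J$, any such $m$ is non-adjacent to some element of $\hat J$: otherwise $\hat J \cup \{m\}$ would have all pairs adjacent and hence be spherical by FC, so either irreducible (contradicting maximality) or reducible, which by irreducibility of $\hat J$ would force $m \in \hat J^\perp$, a contradiction. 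Therefore $\mu_i := ((p_i, \hat J), m)$ is a valid marking with support and marker in $P$ for both $i = 1, 2$; writing $(p_i, w_i) := (p_i, \hat J)$, the consistency hypothesis and Definition~\ref{half-space} give $\Phi_{p_i} = \Phi(\W_{p_i}, w_i \W_m)$.

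To conclude, I would combine the above with Lemma~\ref{lem:spherical same side}, which gives $\Phi(\W_{p_i}, F) = \Phi(\W_{p_i}, w_i.F)$ for a fundamental domain $F$ of $\langle \hat J \rangle$, reducing the claim to showing that $w_i \W_m$ lies on the same side of $\W_{p_i}$ as $w_i.F$ coherently for $i = 1, 2$. I would establish this coherent-side statement via a wall-crossing argument in $\Da$: property~(iii) of Definition~\ref{domain} for the bases $(p_i, \hat J)$ controls which walls separate $c_0$ from $w_i.c_0$, and Lemma~\ref{lem:residue} applied to the $\hat J$-residue through $c_0$ aligns $\W_m$ with $F$. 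Once $\W_m$ is shown to lie inside a single fundamental domain of $\langle \hat J \rangle$, both $\Phi_{p_1}$ and $\Phi_{p_2}$ correspond to the same fundamental domain of $\langle J \rangle$, making the pair geometric. The main obstacle---as in the proof of \cite[Prop~5.2]{caprace2010twist}, on which this proposition is modeled---is exactly this coherent-side argument, which requires careful control of wall-crossings within the $\hat J$-residue.
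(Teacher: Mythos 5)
Your case split and the setup of the hard case are sound: the commuting case is indeed automatic, the non-adjacent case is correctly pinned down by the marking $((p_1,\{p_1\}),p_2)$ (modulo the standard fact, asserted in Section~\ref{sec:subs}, that a pair of reflections generating an infinite dihedral group is geometric, with the mutual-containment pair of half-spaces the only candidate), and in the adjacent non-commuting case your construction of a maximal irreducible spherical $\hat J\subseteq P$ containing $\{p_1,p_2\}$ and a common marker $m\in P\setminus(\hat J\cup\hat J^\perp)$ is correct and uses the FC and irreducibility hypotheses exactly where they are needed. However, the proof stops precisely where the proposition's content begins: the ``coherent-side'' claim that $w_1\W_m$ and $w_2\W_m$ select the same fundamental domain of $\langle p_1,p_2\rangle$ is announced as the main obstacle and never established. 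Worse, the sketch you offer for it would not work as stated: Definition~\ref{domain}(iii) constrains which walls separate $c_0$ from $w_i.c_0$ \emph{in the reference complex} $\Dr$ (the walls $\Y_r$), whereas the half-spaces $\Phi_{p_i}$ live in the ambient complex $\Da$ (the walls $\W_r$), and there is no direct transfer of separation data between the two complexes --- bridging that gap is the whole difficulty, and it is where angle-compatibility (not just reflection-compatibility) must enter. Your argument as written never uses angle-compatibility in the hard case, which is a sign something essential is missing. The consistency hypothesis is also used only once per core (to evaluate $\Phi_{p_i}$ on one convenient marking), whereas the actual argument exploits the freedom to compare several markings with the same core.

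For comparison: the paper does not reprove this statement at all. It observes that the markings of Definition~\ref{marking} are particular cases of the markings of \cite{caprace2010twist}, that under the FC hypothesis the proof of \cite[Prop~5.2]{caprace2010twist} only ever invokes markings of this restricted type, and then imports that proposition wholesale. So if you want a self-contained argument you must reproduce the analysis of \cite[Prop~5.2]{caprace2010twist}; if not, the honest route is the citation, after checking (as the paper does) that the hypotheses match.
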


We summarise Remark~\ref{geometric} and Proposition~\ref{consistence} in the following.

\begin{cor}
	\label{cor:geometric criterion}
	If for each $s\in S$ any marking $\mu$ with core $s$ gives rise to the same $\Phi_s^{\mu}$, then $S$ is conjugate to $S'$.
\end{cor}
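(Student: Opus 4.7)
The plan is to apply Proposition~\ref{consistence} with $P=S$ and then invoke Remark~\ref{geometric} twice. The standing assumption from the Organisation paragraph gives that $S$ is irreducible and non-spherical, so $P=S$ satisfies the hypotheses on $P$ in Proposition~\ref{consistence}. Moreover, since in the definition of a marking the support and the marker are by construction elements of $S$, any marking with core $s\in S$ automatically has its support and marker in $P=S$, so the hypothesis of Proposition~\ref{consistence} (every marking with core $p_i$ and support/marker in $P$ gives the same $\Phi_{p_i}^{\mu}$) is weaker than, and thus implied by, the hypothesis of the corollary.

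Concretely, for each $s\in S$ define $\Phi_s$ to be the common value of $\Phi_s^{\mu}$ over all markings $\mu$ with core $s$; this is well defined by assumption, and at least one such marking exists by Remark~\ref{rem:find markings} since $S$ is irreducible non-spherical. Applying Proposition~\ref{consistence} to each pair $p_1,p_2\in S$ yields that $\{\Phi_{p_1},\Phi_{p_2}\}$ is geometric, i.e.\ $\Phi_{p_1}\cap\Phi_{p_2}\cap\Da^{(0)}$ is a fundamental domain for the action of $\langle p_1,p_2\rangle$. Since this holds for every pair, the collection $\{\Phi_s\}_{s\in S}$ is $2$-geometric in the sense of Definition~\ref{def:geom}, so $S$ itself is $2$-geometric.

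By the first part of Remark~\ref{geometric}, after possibly replacing some of the $\Phi_s$ by their opposite half-spaces, the collection $\{\Phi_s\}_{s\in S}$ becomes geometric; by the second part of Remark~\ref{geometric}, applied with $P=S$, this forces $S$ to be conjugate to $S'$. Since the corollary is explicitly advertised as a summary of Remark~\ref{geometric} and Proposition~\ref{consistence}, no genuine obstacle arises; the only thing to be careful about is checking that the marking hypothesis of the proposition matches the one in the corollary when $P=S$, which is immediate from the fact that markings have no extraneous data lying outside $S$.
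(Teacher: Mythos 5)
Your proof is correct and follows exactly the route the paper intends (the corollary is stated as a summary of Remark~\ref{geometric} and Proposition~\ref{consistence}, with no separate proof given): apply Proposition~\ref{consistence} with $P=S$, using the standing assumption that $S$ is irreducible and non-spherical, to get that $\{\Phi_s\}_{s\in S}$ is $2$-geometric, then conclude via Remark~\ref{geometric}. You also correctly note the only point needing a check, namely that for $P=S$ the marking hypothesis of the proposition coincides with that of the corollary.
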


Also note that since $S$ is of type FC, by \cite[Lem~4.2 and~Thm~4.5]{caprace2010twist} a 1-rigid subset $P\subseteq S$ satisfies the hypothesis of Proposition~\ref{consistence}.

\begin{cor}
\label{thm:twist rigid geometric}
If $P\subseteq S$ is 1-rigid, then it is geometric.
\end{cor}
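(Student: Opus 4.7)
The plan is to reduce the statement to the irreducible case and invoke Proposition~\ref{consistence} together with the marking-independence result cited immediately before the corollary.

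First I would decompose $P = P_1 \sqcup \cdots \sqcup P_k$ into its irreducible components, where elements of distinct $P_i$ and $P_j$ commute. This amounts to iterating the definition of irreducibility. I would then check that 1-rigidity is inherited by each $P_i$: if $J \subset P_i$ were irreducible spherical and weakly separated $P_i$, then since $P_j \subset J^{\perp}$ for $j \neq i$, any nontrivial partition of $P_i \setminus (J \cup J^\perp)$ would extend to one of $P \setminus (J \cup J^\perp)$, so $J$ would weakly separate $P$, contradicting its 1-rigidity.

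For each component $P_i$ I would produce a collection $\{\Phi_p\}_{p \in P_i}$ witnessing that $P_i$ is geometric. If $P_i$ is spherical, this is immediate from Corollary~\ref{cor:spherical conjugate}. If $P_i$ is irreducible non-spherical, the cited \cite[Lem~4.2 and~Thm~4.5]{caprace2010twist} (which use type FC together with 1-rigidity) guarantee that for every $s \in P_i$ every marking $\mu$ with core $s$ and support and marker in $P_i$ defines the same half-space $\Phi_s^\mu$. Proposition~\ref{consistence} then yields that $\{\Phi_p, \Phi_q\}$ is 2-geometric for every pair $p, q \in P_i$, so the whole collection on $P_i$ is 2-geometric.

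Finally I would glue these local collections together. For $p \in P_i$ and $q \in P_j$ with $i \neq j$, the reflections commute, so $\langle p, q \rangle$ has order $4$, the walls $\W_p$ and $\W_q$ cross in $\Da$, and any choice of half-spaces makes $\{\Phi_p, \Phi_q\}$ automatically 2-geometric. Thus the combined collection $\{\Phi_p\}_{p \in P}$ is 2-geometric, proving that $P$ is geometric. The main technical content is hidden in the cited marking-independence results from \cite{caprace2010twist}; the argument sketched above is merely a reduction on top of Proposition~\ref{consistence}, so the main obstacle to this corollary was already overcome earlier in the paper.
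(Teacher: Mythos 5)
Your proof is correct and takes essentially the same route as the paper, whose entire argument is the sentence preceding the corollary: the cited results from \cite{caprace2010twist} verify the hypothesis of Proposition~\ref{consistence}, pairwise geometricity gives a $2$-geometric collection, and Remark~\ref{geometric} concludes. Your additional reduction to irreducible components and the observation that commuting cross-component pairs are automatically $2$-geometric is sound; the paper omits this only because it applies the corollary exclusively to irreducible non-spherical $P$.
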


\section{Compatibility of markings}
\label{sec:compatibility}
Let $S,S',W,\Dr$ and $\Da$ be as in Section~\ref{sec:subs}.

\begin{defn}\cite[Def 4.1]{caprace2010twist}
	\label{def:move}
	Let $((s,J),m), ((s,J'),m')$ be markings with common core. We say that they are
	related by \emph{move}
	\begin{enumerate}
		\item[(M1)] if $J=J'$, and the markers $m$ and $m'$ are adjacent;
		\item[(M2)] if there is $j\in S$ such that $J=J'\cup\{j\}$ and
		moreover $m$ equals $m'$ and is adjacent to $j$.
	\end{enumerate}
We will write $((s,J),m)\sim((s,J'),m')$ if there is a finite sequence of moves of type M1 or M2 that brings  $((s,J),m)$ to $((s,J'),m')$.
\end{defn}

The following is a special case of \cite[Lem 4.2]{caprace2010twist}.
\begin{lem}
	\label{moves do not change half-space} If markings $\mu$ and $\mu'$ with common core $s$
	are related by move $\mathrm{M1}$ or~$\mathrm{M2}$, then $\Phi_s^\mu=\Phi_s^{\mu'}$.
	\end{lem}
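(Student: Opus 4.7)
The plan is to verify in each case that the two walls $w\W_m$ and $w'\W_{m'}$ (whose respective half-spaces for $\W_s$ define $\Phi_s^{\mu}$ and $\Phi_s^{\mu'}$) lie in a common half-space of $\W_s$, by exhibiting a point where they cross. The key preliminary fact, recorded in the paragraph following Definition~\ref{marking}, is that for any marking $((s,w),m)$ the wall $w\Y_m$ is disjoint from $\Y_s$ in $\Dr$. Whether two walls in a Davis complex are disjoint or cross is determined purely by whether the product of the two reflections has infinite or finite order in $W$, so the same disjointness passes to the ambient complex: $w\W_m \cap \W_s = \emptyset$, and hence $w\W_m$ lies entirely in $\Phi_s^{\mu}$.

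For move M1, we have a common base $(s,w)$ and adjacent markers $m, m'$. Both $w\W_m$ and $w\W_{m'}$ sit in $\Phi_s^{\mu}$ and $\Phi_s^{\mu'}$ respectively by the preliminary fact. Because $m$ and $m'$ are adjacent, $\langle m,m'\rangle$ is finite, so $\W_m$ and $\W_{m'}$ cross in $\Da$; translating by $w$, the walls $w\W_m$ and $w\W_{m'}$ share a point, which must therefore lie on the same side of $\W_s$. Hence $\Phi_s^{\mu}=\Phi_s^{\mu'}$.

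For move M2, we have $((s,J),m)$ and $((s,J'),m)$ with $J=J'\cup\{j\}$ and $m$ adjacent to $j$. First I would use Remark~\ref{simple} to order $J\setminus\{s\}$ as a sequence with irreducible prefixes and with $j$ appearing last; this is possible because $J'$ is irreducible spherical, and the uniqueness part Remark~\ref{simple}(ii) then forces $w=w'j$, where $w, w'$ denote the base elements for $J, J'$. Consequently $w\W_m = w'(j\W_m)=w'\W_{jmj}$, and both $w'\W_m$ and $w'\W_{jmj}$ are disjoint from $\W_s$ by the preliminary fact applied to $\mu'$ and $\mu$ respectively. Since $j$ and $m$ are adjacent, $\langle m, jmj\rangle \subseteq \langle m,j\rangle$ is finite, so $\W_m$ and $\W_{jmj}$ cross in $\Da$; translating by $w'$, the walls $w'\W_m$ and $w\W_m$ cross, and the same argument as in M1 yields $\Phi_s^{\mu}=\Phi_s^{\mu'}$.

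The main subtleties I expect to encounter are: (a) justifying the transfer of the disjointness $w\Y_m\cap\Y_s=\emptyset$ from $\Dr$ to $\Da$, which silently uses reflection-compatibility of $S$ with $S'$ so that $s$ and $wmw^{-1}$ are $S'$-reflections with well-defined walls in $\Da$ governed by the same algebraic orders; and (b) in move M2, pinning down the factorization $w=w'j$, which requires irreducibility of $J'$ together with Remark~\ref{simple}(ii). Once these are in hand, the rest is a straightforward crossing argument that exhibits a common chamber incident to both walls.
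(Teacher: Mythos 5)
Your argument is correct. The paper does not prove this lemma itself but defers to \cite[Lem 4.2]{caprace2010twist}, and your write-up is a faithful self-contained reconstruction of that proof: both cases reduce to showing that the two defining walls ($w\W_m$ and $w\W_{m'}$ for M1, $w'\W_{m}$ and $w'\W_{jmj}$ for M2) meet in $\Da$ while each is disjoint from $\W_s$, so connectedness forces them into the same half-space. The two subtleties you flag are exactly the right ones, and both are handled correctly (the wall-crossing criterion via finiteness of $\langle r,r'\rangle$ is generating-set independent, and $w=w'j$ follows from Remark~\ref{simple}(ii) since appending $j$ to an ordering of $J'\setminus\{s\}$ with irreducible prefixes yields one for $J$); the only degenerate case, $jmj=m$ in M2, makes the conclusion trivial rather than breaking the crossing argument.
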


The goal of this section is to provide two generalisations of \cite[Thm 4.5]{caprace2010twist}.
\begin{prop}
	\label{lem:compactible1}
Let $I\subset S$ be irreducible spherical. Suppose that no irreducible spherical $I'\supsetneq I$ weakly separates $S$. 
Let $\mu_1=(J_1,m_1)$ and $\mu_2=(J_2,m_2)$ be markings with common core $s\in I$ and such that $I\subseteq J_1,J_2$. Moreover, for $i=1,2$, define $K_i=J_i\setminus (I\cup I^{\perp})$ when $I\subsetneq J_i$, and $K_i=\{m_i\}$ when $J_i=I$. Suppose that $K_1$ and $K_2$ are in the same component $C$ of $S\setminus(I\cup I^{\perp})$. Then $\mu_1\sim\mu_2$.  Consequently $\Phi_s^{\mu_1}=\Phi_s^{\mu_2}$.\end{prop}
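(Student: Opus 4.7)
The plan is to reduce each $\mu_i$ ($i=1,2$) to a canonical marking of the form $((s,I),m_i^{\ast})$ with $m_i^{\ast}\in C$ through a sequence of $\mathrm{M1}$ and $\mathrm{M2}$ moves, and then to link the two canonical markings by a chain of $\mathrm{M1}$ moves along a path in $C$. Once $\mu_1\sim\mu_2$ is established in this way, the conclusion $\Phi_s^{\mu_1}=\Phi_s^{\mu_2}$ follows from Lemma~\ref{moves do not change half-space}.

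The reduction is by induction on $|J_i|-|I|$. The base case $J_i=I$ is immediate since then $K_i=\{m_i\}\subseteq C$. For the inductive step $J_i\supsetneq I$, I first check that $K_i\neq\emptyset$: otherwise $J_i\subseteq I\cup I^{\perp}$, and writing $K=J_i\cap I^{\perp}$ gives $J_i=K\cup K^{\perp}$ with $K$ non-empty and proper, contradicting the irreducibility of the support. Then I pick $j\in J_i\setminus I$ such that $J_i\setminus\{j\}$ is still irreducible spherical: take a spanning tree of the Coxeter--Dynkin diagram of $J_i$ extending a spanning tree of the subdiagram on $I$, and contract the latter to produce a leaf in $J_i\setminus I$. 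The hypothesis now enters decisively: since $J_i\supsetneq I$ is irreducible spherical, it does not weakly separate $S$, so the set of valid markers $S\setminus(J_i\cup J_i^{\perp})$ is connected in the defining graph, and hence any two valid markers are related by $\mathrm{M1}$ moves. I then locate a valid marker $m''$ for $(s,J_i)$ adjacent to $j$, use an $\mathrm{M1}$ chain to replace $m_i$ by $m''$, and apply an $\mathrm{M2}$ move to remove $j$, landing at $((s,J_i\setminus\{j\}),m'')$. This is a valid marking because $m''$ is non-adjacent to some element of $J_i$ while being adjacent to $j$, so it is non-adjacent to some element of $J_i\setminus\{j\}$. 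The new $K'$ satisfies $K'\subseteq K_i\subseteq C$, and in the borderline case $J_i\setminus\{j\}=I$ one verifies that $m''\in C$: indeed $m''$ is adjacent to $j\in C$ and non-adjacent to some element of $I$ (hence $m''\notin I\cup I^{\perp}$), placing it in the same component of $S\setminus(I\cup I^{\perp})$ as $j$. The induction closes.

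Applying the reduction to both $i=1,2$ gives $\mu_i\sim((s,I),m_i^{\ast})$ with $m_i^{\ast}\in C$. A path $m_1^{\ast}=n_0,n_1,\ldots,n_k=m_2^{\ast}$ in the connected set $C$ with consecutive $n_\ell$ adjacent then links the two canonical markings via $\mathrm{M1}$ moves; each $((s,I),n_\ell)$ is a valid marking because $n_\ell\in C\subseteq S\setminus(I\cup I^{\perp})$ is non-adjacent to some element of $I$. Concatenating all these equivalences yields $\mu_1\sim\mu_2$. I expect the main technical obstacle to be the existence of a valid marker $m''$ adjacent to $j$ in the inductive step: this should require a careful combined use of the leaf choice for $j$ and the constraint $K_i\subseteq C$ to guarantee that $j$ has a neighbor in $S\setminus(J_i\cup J_i^{\perp})$. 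This is precisely the point where the local weak-separation hypothesis on irreducible spherical $I'\supsetneq I$ substitutes for the global twist-rigidity used in \cite[Thm~4.5]{caprace2010twist}.
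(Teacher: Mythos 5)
Your plan fails at the two places where you chain M1 moves, and both failures have the same source: you conflate ``$m\in S\setminus(J\cup J^{\perp})$'' with ``$m$ is a valid marker for $(s,J)$''. In the final step you assert that each $((s,I),n_\ell)$ is a marking because $n_\ell\in C\subseteq S\setminus(I\cup I^{\perp})$ ``is non-adjacent to some element of $I$''. That is false: a vertex outside $I\cup I^{\perp}$ may be adjacent (in the defining graph) to \emph{every} element of $I$ while merely failing to commute with some of them; then $((s,I),n_\ell)$ is not a marking at all, and the chain breaks. The same problem sinks the inductive step, where you claim that connectivity of $S\setminus(J_i\cup J_i^{\perp})$ (which does follow from the non-weak-separation hypothesis) implies that any two valid markers are linked by M1 moves: a path between them may pass through vertices adjacent to all of $J_i$, which are not markers. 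On top of this, the existence of a valid marker $m''$ adjacent to your chosen leaf $j$ --- which you flag as ``the main technical obstacle'' --- is genuinely unproven and is not guaranteed by anything you set up; if no marker is adjacent to $j$, the M2 move removing $j$ is simply unavailable, so your support-shrinking reduction cannot even begin.

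The missing idea is the mechanism the paper uses precisely when the chain hits a vertex $k$ adjacent to all of $I$: by type FC the set $I\cup\{k\}$ is then spherical, so one \emph{enlarges} the support to absorb $k$ (Remark~\ref{rem:find markings} produces a marking whose support contains $I\cup\{k\}$) rather than trying to stay at base $(s,I)$. This is organised as a descending induction: take $I$ maximal irreducible spherical for which the conclusion fails, and note that for any irreducible spherical $I'\supsetneq I$ the hypothesis forces $S\setminus(I'\cup I'^{\perp})$ to consist of a single component, so the ``same component'' requirement is automatic at the enlarged level and maximality of $I$ applies. Combined with an inner induction on the distance in $C$ between $K_1$ and $K_2$, this replaces both your reduction to canonical markings and your final M1 chain. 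In short, the paper only ever grows supports (M2 forwards) and lets the maximality of $I$ do the gluing, whereas your proposal tries to shrink supports down to $I$ and then walk along $C$ at that fixed base --- and both of those operations are obstructed in exactly the configurations the hypothesis is designed to handle.
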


\begin{proof}
We follow the proof of Wojtaszczyk \cite[App C]{caprace2010twist}, and argue by contradiction. 
Let $I$ be maximal irreducible spherical satisfying the hypothesis of the proposition
but with $\mu_1\not\sim\mu_2$.

The \emph{$I$-distance} between $\mu_1$ and $\mu_2$ is the length of a shortest edge-path in (the subgraph induced on) $C$ between a vertex of $K_1$ and a vertex of $K_2$. 
(Such a path exists by our hypotheses.)
Among pairs $\mu_1$, $\mu_2$ as above choose a pair with minimal $I$-distance.

If the $I$-distance between $\mu_1$ and $\mu_2$ is $0$, then either $\{m_1\}=K_1=K_2=\{m_2\}$ yielding $\mu_1=\mu_2$, which is a contradiction, or $J_1\cap J_2\setminus {(I\cup I^\perp)}\neq \emptyset$ giving a contradiction with the maximality of $I$. 

Now assume that the $I$-distance between $\mu_1$ and $\mu_2$ is $1$. Then there are two cases to consider. First consider the case where one of $J_i$, say $J_1$, equals $I$.
If also $J_2=I$, then $m_1$ and $m_2$ are adjacent. Thus $\mu_1$ and~$\mu_2$ are related by move M1, which is a contradiction. If $I\subsetneq J_2$, then there exists $k_2\in J_2\setminus (I\cup I^{\perp})$ such that $k_2$ and $m_1$ are adjacent. Thus $\mu_1$ is related to $(I\cup\{k_2\},m_1)$ by move M2. However, $(I\cup\{k_2\},m_1)\sim \mu_2$ by the maximality of $I$, which is a contradiction. It remains to consider the case where $I\subsetneq J_1,J_2$. Then there exist $k_i\in J_i\setminus (I\cup I^{\perp})$ such that $k_1$ and $k_2$ are adjacent. Note that $I\cup\{k_1,k_2\}$ is spherical and irreducible. By Remark~\ref{rem:find markings}, there exists a marking $\nu$ with core $s$ and support containing $I\cup\{k_1,k_2\}$. By the maximality of~$I$, we have $\mu_1\sim \nu\sim \mu_2$, which is a contradiction.

If the $I$-distance between $\mu_1$ and $\mu_2$ is $\ge 2$, let $\gamma$ be a shortest edge-path in~$C$ connecting a vertex $k_1\in K_1$ to a vertex $k_2\in K_2$. Let $k$ be the vertex on~$\gamma$ following~$k_1$. If $I\cup \{k\}$ is spherical, then again by Remark~\ref{rem:find markings}, there exists a marking $\nu$ with core $s$ and support containing $I\cup \{k\}$. Since we chose $\mu_1$ and $\mu_2$ to have minimal $I$-distance, we obtain $\mu_1\sim \nu\sim \mu_2$, which is a contradiction. If $I\cup \{k\}$ is not spherical, then $(I,k)$ is a marking, hence analogously $\mu_1\sim (I,k)\sim \mu_2$, which is a contradiction.
\end{proof}

\begin{prop}
	\label{lem:compactible2}
 Let $P\subseteq S$ be irreducible non-spherical. Suppose that for any irreducible spherical $L\subset S$ with $L\cap P\neq\emptyset$, all elements of $P\setminus(L\cup L^{\perp})$ are in one component of $S\setminus(L\cup L^{\perp})$. Then for any markings $\mu_1$ and $\mu_2$ with supports and markers in $P$ and common core $p$, we have $\mu_1\sim\mu_2$. Consequently $\Phi^{\mu_1}_p=\Phi^{\mu_2}_p$ and by Proposition~\ref{consistence}, $P$ is geometric.
\end{prop}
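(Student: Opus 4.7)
The proof follows the template of Proposition~\ref{lem:compactible1} (Wojtaszczyk's argument), adapted to keep intermediate objects inside $P$ where possible and using the hypothesis of the present proposition in place of the non-separation assumption of \cite[App C]{caprace2010twist}. Arguing by contradiction, suppose $\mu_1 \not\sim \mu_2$ for some pair as in the statement. Among all irreducible spherical $I \subseteq P$ with $p \in I$ admitting such a counterexample with $I \subseteq J_1 \cap J_2$, pick $I$ with $|I|$ maximal. The hypothesis, applied to $L = I$, produces a unique component $C$ of $S \setminus (I \cup I^{\perp})$ containing $P \setminus (I \cup I^{\perp})$; hence $K_1, K_2 \subseteq C$, where $K_i$ is defined as in Proposition~\ref{lem:compactible1}. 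Among counterexamples for this $I$, pick one minimising the $I$-distance, i.e.\ the length of a shortest edge-path in $C$ from a vertex of $K_1$ to a vertex of $K_2$.

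At $I$-distance $0$ or $1$, the argument of Proposition~\ref{lem:compactible1} goes through because the vertices used to enlarge $I$ lie in $J_1 \cup J_2 \subseteq P$, so extensions remain in $P$ and contradict the maximality of $I$. The additional care needed is in constructing the bridging marking $\nu$ in the subcase $I \subsetneq J_1, J_2$ of distance $1$: I enlarge $I \cup \{k_1, k_2\}$ to a maximal irreducible spherical subset $L \subseteq P$ by iteratively adjoining elements $m \in P \setminus (L \cup L^{\perp})$ for which $L \cup \{m\}$ remains spherical (irreducibility is automatic since $m \notin L^{\perp}$). Since $P$ is irreducible and non-spherical, $P \setminus (L \cup L^{\perp}) \neq \emptyset$; maximality of $L$ in $P$ then forces $L \cup \{m_\nu\}$ to be non-spherical for any $m_\nu$ in that difference, so by Remark~\ref{rem:find markings}, $\nu = ((p, L), m_\nu)$ is a marking with support and marker in $P$. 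Maximality of $I$, applied via $I \cup \{k_i\} \subseteq J_i \cap L$, then yields $\mu_1 \sim \nu \sim \mu_2$, the desired contradiction.

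The case $I$-distance $\geq 2$ is the main obstacle. Let $\gamma$ be a shortest path in $C$ from $k_1 \in K_1$ to $k_2 \in K_2$, and $k$ the vertex after $k_1$. If $k \in P$, the argument of Proposition~\ref{lem:compactible1} applies using the above $\nu$-construction. If $k \notin P$, extending $I$ by $k$ or using $k$ as a marker leaves the $P$-class to which the maximality argument applies. The plan is to invoke the hypothesis of the present proposition as a rerouting tool: when $I \cup \{k\}$ is irreducible spherical, applying the hypothesis with $L = I \cup \{k\}$ places $K_1, K_2$ in a single component of $S \setminus (L \cup L^{\perp})$, and an iterative argument on shortest paths produces one whose vertex after $k_1$ lies in $P$. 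When $I \cup \{k\}$ is not spherical, $((p, I), k)$ is still a valid marking by Remark~\ref{rem:find markings}, and a variant of the minimality argument---allowing auxiliary markings with support in $P$ but marker outside---handles this subcase. The resulting contradiction gives $\mu_1 \sim \mu_2$; Lemma~\ref{moves do not change half-space} and Proposition~\ref{consistence} then complete the proof.
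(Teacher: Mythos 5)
Your handling of $I$-distance $0$ and $1$ is sound and matches the paper's argument in those cases: there the bridging vertices lie in $J_1\cup J_2\subseteq P$, and your construction of the auxiliary marking $\nu$ (saturate $I\cup\{k_1,k_2\}$ inside $P$, then pick a marker in $P\setminus(L\cup L^{\perp})$) is exactly what the paper's Lemma~\ref{claim} provides. The genuine gap is the case you yourself flag as ``the main obstacle'': $I$-distance $\geq 2$ with the bridging vertex $k\notin P$. Neither of your two proposed fixes is worked out, and the first one fails in general. The hypothesis of the proposition only asserts that $P\setminus(L\cup L^{\perp})$ lies in one component of $S\setminus(L\cup L^{\perp})$ -- the connecting paths live in $S$, not in $P$ -- so there is no reason a shortest path in $C$ from $K_1$ to $K_2$ can be rerouted to have its second vertex in $P$: it may be that \emph{every} path in $C$ between $K_1$ and $K_2$ immediately leaves $P$. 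Your second fix (allowing auxiliary markings with marker outside $P$) breaks the induction as you have set it up: once $\nu$ has support or marker outside $P$, the pairs $(\nu,\mu_1)$ and $(\nu,\mu_2)$ are no longer counterexamples of the type over which you maximised $I$ and minimised the $I$-distance, so neither extremality can be invoked; moreover, for such a $\nu$ it is no longer automatic that the relevant $I'$-distances are finite for larger $I'$.

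The paper closes exactly this gap by enlarging the class of markings in the induction to the \emph{admissible} ones: markings whose support and marker may leave $P$, but which satisfy, for every irreducible spherical $L$ containing $p$, the connectivity conditions (2) and (3) relating $J\setminus(L\cup L^{\perp})$ (or the marker) to $P\setminus(L\cup L^{\perp})$. This class contains $\mu_1,\mu_2$ (via Lemma~\ref{claim} applied to $(p,\{p\})$), is closed under the bridging constructions with $k\notin P$ (verified using that $\gamma$ is a geodesic, so $\gamma\cap L$ is at most an edge), and guarantees finite $I$-distance for every pair, so the double extremal choice remains valid; note also that the maximal $I$ is then taken over \emph{all} irreducible spherical subsets containing $p$, not only those inside $P$. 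Without this (or an equivalent) bookkeeping device, your induction does not go through.
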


Note that $P\setminus (L\cup L^{\perp})\neq\emptyset$ for any irreducible spherical $L$. In the proof we will need the following terminology (depending on $P$). A marking $\mu=((p,J), m)$ is \emph{admissible} if \begin{enumerate}
	\item $p\in P$, and
	\item if $L\subset S$ is irreducible spherical such that $p\in L$ and $J\nsubseteq L$, then $J\setminus(L\cup L^{\perp})$ (which is nonempty) and $P\setminus(L\cup L^{\perp})$ are in the same component of $S\setminus(L\cup L^{\perp})$, and
	\item if $L\subset S$ is irreducible spherical such that $J\subseteq L$, then $m$ and $P\setminus(L\cup L^{\perp})$ are in the same component of $S\setminus(L\cup L^{\perp})$.
\end{enumerate}

We also say that a base $(p,J)$ is \emph{admissible} if it satisfies Conditions (1) and (2).

\begin{lem}
	\label{claim}
	Suppose that $(p,J)$ is admissible. Let $\nu=((p,J'),m)$ be such that $J\subseteq J'$, $J'\setminus J\subset P$ and $m\in P$. Then $\nu$ is admissible.
\end{lem}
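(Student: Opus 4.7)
The plan is to verify each of the three admissibility conditions for $\nu=((p,J'),m)$ in turn. Condition (1), namely $p\in P$, is inherited directly from the admissibility of $(p,J)$.

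For Condition (2), fix an irreducible spherical $L\subseteq S$ with $p\in L$ and $J'\nsubseteq L$, and split on whether or not $J\subseteq L$. If $J\nsubseteq L$, applying Condition (2) for $(p,J)$ yields a component $C$ of $S\setminus(L\cup L^{\perp})$ containing both $J\setminus(L\cup L^{\perp})$ and $P\setminus(L\cup L^{\perp})$; since $J'\setminus J\subseteq P$, we get $(J'\setminus J)\setminus(L\cup L^{\perp})\subseteq P\setminus(L\cup L^{\perp})\subseteq C$, and combining with $J\setminus(L\cup L^{\perp})\subseteq C$ places all of $J'\setminus(L\cup L^{\perp})$ in $C$. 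If instead $J\subseteq L$, then $J'\setminus L\subseteq J'\setminus J\subseteq P$, so $J'\setminus(L\cup L^{\perp})\subseteq P\setminus(L\cup L^{\perp})$, and the hypothesis of Proposition~\ref{lem:compactible2} that $P\setminus(L\cup L^{\perp})$ lies in a single component of $S\setminus(L\cup L^{\perp})$ finishes the case.

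For Condition (3), let $L$ be irreducible spherical with $J'\subseteq L$, so in particular $J\subseteq L$. By the marker property there is $j\in J$ with $m$ not adjacent to $j$, i.e., $m_{mj}=\infty$. If $m$ were in $L$, sphericity of $L$ would force $m_{mj}<\infty$; if $m$ were in $L^{\perp}$, then $m$ would commute with $j$, forcing $m_{mj}=2$. Hence $m\in S\setminus(L\cup L^{\perp})$, and since $m\in P$ we get $m\in P\setminus(L\cup L^{\perp})$. The single-component hypothesis of Proposition~\ref{lem:compactible2} applied to $L$ (which meets $P$ at $p$) then places $m$ and $P\setminus(L\cup L^{\perp})$ in the same component.

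I do not anticipate a genuine obstacle: the only real choice is to recognise that in the case $J\subseteq L$ of Condition (2), no information from admissibility of $(p,J)$ is available, and one must instead invoke the single-component hypothesis of Proposition~\ref{lem:compactible2} directly, using $J'\setminus J\subseteq P$ to funnel the new elements of the support into $P\setminus(L\cup L^{\perp})$.
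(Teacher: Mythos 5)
Your proof is correct and follows essentially the same route as the paper's (Condition (2) is split on whether $J\subseteq L$, using admissibility of $(p,J)$ in one case and $J'\setminus J\subset P$ plus the single-component hypothesis in the other, and Condition (3) is reduced to $m\in P$); you merely spell out details the paper leaves implicit. One trivial slip: the marker property of $\nu$ gives $j\in J'$ (not necessarily $j\in J$) non-adjacent to $m$, but since $J'\subseteq L$ this does not affect the argument.
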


Note that since $P$ is irreducible non-spherical, such $\nu$ exists for each $J$.

\begin{proof}
Condition (1) is immediate. For Condition (2), pick irreducible spherical $L\subset S$ such that $p\in L$ and $J'\nsubseteq L$. If $J\nsubseteq L$, then $\emptyset\neq J\setminus(L\cup L^{\perp})\subseteq J'\setminus(L\cup L^{\perp})$. Since $(p,J)$ is admissible, Condition (2) holds for such $L$ and $J'$. If $J\subseteq L$, then $J'\setminus(L\cup L^{\perp})\subseteq J'\setminus J\subset P$, hence Condition (2) holds for such $L$ and~$J'$. Condition~(3) is immediate, since we have $m\in P$.
\end{proof}

\begin{proof}[Proof of Proposition~\ref{lem:compactible2}] It is clear that for $p\in P$ the base $(p,\{p\})$ is admissible. Thus by Lemma~\ref{claim} both $\mu_1$ and $\mu_2$ are admissible. 
Hence to prove the proposition it suffices to show that for any two admissible markings $\mu_1,\mu_2$ with common core~$p$, we have $\mu_1\sim \mu_2$. 

We argue by contradiction. Let $I\ni p$ be maximal irreducible spherical such that there are admissible markings $\mu_1=(J_1,m_1)$ and $\mu_2=(J_2,m_2)$ with $I\subseteq J_1,J_2$, and $\mu_1\not\sim \mu_2$. We define $K_1$, $K_2$, and the $I$-distance between $\mu_1$ and $\mu_2$ as in the proof of Proposition~\ref{lem:compactible1}. Since both $\mu_1$ and~$\mu_2$ are admissible, their $I$-distance is finite. Among pairs $\mu_1$, $\mu_2$ as above choose a pair with minimal $I$-distance.

If the $I$-distance is $0$, then either $\mu_1=\mu_2$, or there is irreducible $I'\supsetneq I$ contained in both $J_1$ and $J_2$, contradiction.
Suppose now that the $I$-distance is $1$. There are two cases to consider.

\smallskip\noindent \textbf{Case 1: $J_1=I$}. If $J_2=I$, then $\mu_1$ and $\mu_2$ are related by move M1, contradiction. Now we assume $I\subsetneq J_2$. Pick $k_2\in K_2$ adjacent to $m_1$. Then $I'=I\cup\{k_2\}$ is spherical and irreducible. Moreover, $\mu_1\sim (I',m_1)$ by move M2. We claim that $(I',m_1)$ is admissible. Then $(I',m_1)\sim\mu_2$ by the maximality of $I$, which yields a contradiction. Now we prove the claim. For Condition (2), let $p\in L$ and $I'\nsubseteq L$. If $I\nsubseteq L$, it suffices to use Condition (2) in the admissibility of $\mu_1$. Now suppose $I\subseteq L$. Then $I'\setminus (L\cup L^\perp)=\{k_2\}$. By Condition (2) in the admissibility of $\mu_2$, we have that $k_2$ is in the same component of $S\setminus(L\cup L^{\perp})$ as $P\setminus(L\cup L^{\perp})$, as desired. Condition~(3) follows immediately from Condition (3) in the admissibility of $\mu_1$.

\smallskip \noindent \textbf{Case 2: $I\subsetneq J_1$ and $I\subsetneq J_2$}. For $i=1,2$, pick $k_i\in K_i$ such that $k_1$ and $k_2$ are adjacent. Then $J=I\cup\{k_1,k_2\}$ is spherical and irreducible. It is easy to show that $J$ is admissible following the argument from Case 1. Let $\nu$ be an admissible marking constructed from $J$ as in Lemma~\ref{claim}. Then $\mu_1\sim \nu\sim\mu_2$ by the maximality of $I$, which yields a contradiction. 

\smallskip

Finally suppose that the $I$-distance $d$ between $\mu_1$ and $\mu_2$ is $\ge 2$. Let $\gamma$ be a shortest edge-path in the subgraph induced on $S\setminus (I\cup I^\perp)$ starting at $k_1\in K_1$ and ending at $k_2\in K_2$. Let $k$ be the vertex on $\gamma$ following $k_1$. If $J=I\cup \{k\}$ is not spherical, then let $\nu=(I,k)$, otherwise let $\nu$  be defined from $J$ as in Lemma~\ref{claim}.
Since the $I$-distance between $\nu$ and $\mu_1,\mu_2$ is $< d$, to reach a contradiction it suffices to prove that $\nu$ is admissible.

Consider first the case where $J$ is spherical. By Lemma~\ref{claim}, it suffices to prove that $J$ is admissible. Let $p\in L$ and $J\subsetneq L$. If $I\subsetneq L$, then we use the admissibility of $\mu_1$. Otherwise, we have $J\setminus (L\cup L^\perp)=\{k\}$. Since $\gamma$ is a geodesic, $\gamma\cap L$ is empty, a vertex, or an edge. Thus there is a subpath of $\gamma$  from $k$ to $k_1$ or $k_2$ outside $L$. Since $\mu_1,\mu_2$ were admissible, $k$ is in the component of $S\setminus (L\cup L^\perp)$ containing $P\setminus (L\cup L^\perp)$, as desired. The case where $J$ is not spherical is similar.
\end{proof}

\section{Relative position of maximal spherical subsets} 
\label{subsec:relative position}
In this section, we introduce particular subsets of pairs of maximal spherical residues (which will be in Section~\ref{sec:proof} involved in the definition of the complexity of a Coxeter generating set with respect to another one). It is crucial to prove that these subsets are well-defined (Proposition~\ref{prop:consistent}) which is the most technical part of the article, and we recommend to skip it at first reading. Let $S,S',W,\Dr$ and $\Da$ be as in Section~\ref{sec:subs}. \textbf{Throughout the remaining part of the article, we will also assume that $S$ is 2-rigid}.

Let $J\subset S$ be a maximal spherical subset. By Corollary~\ref{cor:spherical conjugate}, $W_J$ stabilises a unique maximal cell $\sigma_J\subset\Da$. Let $C_J$ be the collection of vertices in this maximal cell and let $D_J$ be the elements of $C_J$ incident to each $\W_j$ for $j\in J$. When $J$ is irreducible, then by Corollary~\ref{cor:spherical conjugate}, it is easy to see that $D_J$ is made of two antipodal vertices. In general, let $J=J_1\sqcup\cdots\sqcup J_k$ be the decomposition of $J$ into maximal irreducible subsets. Let $\sigma_J=\sigma_1\times\cdots\times\sigma_k$ be the induced product decomposition of the associated cell. Then $D_J$ is a product of pairs of antipodal vertices $\{u_i,v_i\}$ for each $\sigma_i$. Let $\pi_i\colon D_J\rightarrow \{u_i,v_i\}$ be the coordinate projections.

\begin{defn} 
\label{def:good}
Let $J_1\subset S$ be irreducible spherical and $r\in S$. A vertex $t\in J_1$ is \emph{good with respect to $r$}, if $t$ is adjacent to $r$, or $J_1\setminus (t\cup t^{\perp})$ is non-empty and in the same component of $S\setminus(t\cup t^{\perp})$ as $r$. Note that being good depends on $J_1$.

Let $J$ and $I$ be two maximal spherical subsets of $S$. A maximal irreducible subset~$J_1$ of~$J$ is \emph{good} with respect to $I$ if there exist  non-adjacent $t\in J_1$ and $r\in I$ such that $t$ is good with respect to $r$.
\end{defn}

\begin{defn}
	\label{def:E}
For each ordered pair $(J,I)$ of maximal spherical subsets of $S$, we define the following subset $E_{J,I}\subseteq D_J$. First, for each $i$, consider the following $E^i_{J,I} \subseteq D_J$. If $J_i$ is not good with respect to $I$, then we take $E^i_{J,I}=D_J$. If $J_i$ is good, then let $t$ and $r$ be as in Definition~\ref{def:good}. Then we take $E^i_{J,I}=D_J\cap \Phi(\W_t,\W_r)$ (which is $\pi^{-1}_i(u_i)$ or $\pi_i^{-1}(v_i)$). We define $E_{J,I}=E^1_{J,I}\cap\cdots\cap E^k_{J,I}$. \end{defn}

The goal of this section is to prove the following, saying that $E^i_{J,I}$ does not depend on the choice of $t$ and $r$. 

\begin{prop}
	\label{prop:consistent}
Let $J_1,J$ and $I$ be as Definition~\ref{def:good}. Suppose that we have pairs of non-adjacent vertices $(t,r)$ and $(t',r')$ in $J_1\times I$ such that $t$ is good with respect to $r$, and $t'$ is good with respect to $r'$. Then $E^1_{J,I}={E^1_{J,I}}'$, where
${E^1_{J,I}}'=D_J\cap \Phi(\W_{t'},\W_{r'})$.
\end{prop}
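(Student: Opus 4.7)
The plan is to reduce the statement to a comparison of vertices in $D_{J_1}$ and then invoke Proposition~\ref{lem:compactible2}.

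First I would reduce to $D_{J_1}$. Every element of $J_i$ for $i \neq 1$ commutes with both $t, t' \in J_1$, so $\langle J_i \rangle$ preserves $\W_t$ and $\W_{t'}$ setwise; hence membership of $v \in D_J$ in $\Phi(\W_t, \W_r)$ depends only on $\pi_1(v) \in \{u_1, v_1\}$, and analogously for $\Phi(\W_{t'}, \W_{r'})$. Moreover, since $(t, r)$ is good, the reflection $t$ cannot belong to $\langle I \rangle$ (otherwise $\langle t, r \rangle \subseteq \langle I \rangle$ would be finite, contradicting non-adjacency), so $\W_t$ does not cross $\sigma_I$ and $\sigma_I$ sits in a single half-space of $\W_t$. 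Combined with $\W_r \supseteq \sigma_I$ this gives $\Phi(\W_t, \W_r) = \Phi(\W_t, \sigma_I)$, and similarly $\Phi(\W_{t'}, \W_{r'}) = \Phi(\W_{t'}, \sigma_I)$. Using that all pairs in $I$ are adjacent, one checks that whenever $r_0 \in I$ is non-adjacent to $t$ the pair $(t, r_0)$ is also good, so we may replace $r$ by any such $r_0$; typically a common $r_0 \in I$ non-adjacent to both $t, t'$ exists, reducing us to $r = r'$, $t \neq t'$, and the exceptional case is handled by a minor variant working directly with $\sigma_I$.

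Second, in this reduced case I would apply Proposition~\ref{lem:compactible2} to $P = \{t, t', r\}$. The set $P$ is irreducible (since $r$ is non-adjacent to both $t, t'$) and non-spherical (since $\{t, r\}$ is non-spherical). The hypothesis to verify is that for each irreducible spherical $L \subset S$ meeting $P$, the set $P \setminus (L \cup L^{\perp})$ lies in a single component of $S \setminus (L \cup L^{\perp})$. If $|L| \geq 2$, 2-rigidity makes $S \setminus (L \cup L^{\perp})$ connected. For $L = \{t\}$, either $t' \in t^{\perp}$ and is removed, or $t' \in J_1 \setminus (t \cup t^{\perp})$ and by goodness of $(t, r)$ lies in the same component as $r$; the case $L = \{t'\}$ is symmetric, and for $L = \{r\}$ the elements $t, t'$ span an edge in the defining graph (as $\{t, t'\} \subseteq J_1$ is spherical) that lies in $S \setminus (r \cup r^{\perp})$. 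Hence $P$ is geometric; using the markings $((t, \{t\}), r)$ and $((t', \{t'\}), r)$, one identifies the corresponding half-spaces with $\Phi(\W_t, \W_r)$ and $\Phi(\W_{t'}, \W_r)$, so these form a geometric pair for $\langle t, t' \rangle$.

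Third, when $t, t'$ do not commute, $\{t, t'\}$ is irreducible spherical, and Corollary~\ref{cor:spherical conjugate} gives exactly two geometric pairs for $\langle t, t' \rangle$. The same corollary applied to $J_1$ yields two $J_1$-geometric collections whose restrictions to $\{t, t'\}$ exhaust these two pairs. Hence our pair is the restriction of some $J_1$-geometric collection $\mathcal{C}$, whose fundamental domain meets $\sigma_{J_1}$ in a single chamber; by Corollary~\ref{cor:spherical conjugate} this chamber is $u_1$ or $v_1$ (the two $J_1$-geometric collections corresponding bijectively to the two antipodal vertices of $D_{J_1}$). This vertex lies in both $\Phi(\W_t, \W_r)$ and $\Phi(\W_{t'}, \W_r)$, giving $E^1_{J,I} = {E^1_{J,I}}'$.

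The main obstacle is the commuting case $m_{t, t'} = 2$, which can only arise when $|J_1| \geq 3$. Here $\langle t, t' \rangle$ has four geometric pairs while only two come from $J_1$-restriction, so our pair might be ``mixed''. I would resolve this by bridging through a third element $t'' \in J_1$ with $m_{t, t''}, m_{t', t''} \geq 3$, whose existence is guaranteed by irreducibility of $J_1$ in its Coxeter--Dynkin diagram. The non-commuting argument applied separately to $\{t, t''\}$ and $\{t', t''\}$ produces two $J_1$-geometric collections that share the $t''$-half-space, and since opposite $J_1$-collections disagree at every generator, these must coincide, forcing $\Phi(\W_t, \W_r)$ and $\Phi(\W_{t'}, \W_r)$ to be restrictions of the same collection. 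Arranging that $(t'', r)$ is itself good (possibly after adjusting the marker via Proposition~\ref{lem:compactible1}), and iterating along a Coxeter--Dynkin path when no single $t''$ suffices, is the most delicate portion of the argument.
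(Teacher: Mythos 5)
Your reduction to $D_{J_1}$, and your argument for the case where $t,t'$ do not commute and share a marker $r$ non-adjacent to both, are sound and in fact close to the paper's distance-one case (both rest on producing a geometric set containing $t,t',r$ and then using that a non-commuting pair admits only two fundamental domains, each determined by either of its half-spaces). But there are two genuine gaps. The first is the reduction to $r=r'$: after your preliminary moves the surviving configuration is precisely $t$ adjacent to $r'$, $t'$ adjacent to $r$, with possibly every element of $I$ adjacent to $t$ or to $t'$, so no common marker exists. This is not a ``minor variant'': rewriting the half-spaces as $\Phi(\W_t,\sigma_I)$ and $\Phi(\W_{t'},\sigma_I)$ merely restates the problem, since one must still show that $\sigma_I$ lies on coherent sides of $\W_t$ and $\W_{t'}$. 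The paper handles this by taking the whole Coxeter--Dynkin geodesic $t=t_0,\dots,t_n=t'$ together with $r$ and $r'$, checking that the resulting defining graph (a join of a $4$-cycle with a complete graph) is $1$-rigid, and invoking Corollary~\ref{thm:twist rigid geometric}; some such global argument is required.

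The second and more serious gap is the commuting case. A bridging vertex $t''$ adjacent in the Coxeter--Dynkin diagram to both $t$ and $t'$ need not exist, and along the geodesic $t=t_0,\dots,t_n=t'$ the intermediate vertices $t_i$ fall into three types: (a) non-adjacent to $r$ and good, where your argument applies; (b) adjacent to $r$, where $\W_{t_i}$ crosses $\W_r$, the half-space $\Phi(\W_{t_i},\W_r)$ is undefined and $((t_i,\{t_i\}),r)$ is not a marking, so one must work with translated walls and larger bases --- this is the content of Cases 2--4 of Lemma~\ref{lem:same side}, including the construction of the geometric set $\{j_0,\dots,j_n,r\}$; and (c) non-adjacent to $r$ and not good. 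In case (c) ``adjusting the marker via Proposition~\ref{lem:compactible1}'' cannot help: any $r_0\in I$ non-adjacent to $t_i$ is adjacent to $r$, hence lies in the same component of $S\setminus(t_i\cup t_i^{\perp})$ as $r$, so $t_i$ fails to be good with respect to $r_0$ as well, while any $r_0\in I$ adjacent to $t_i$ has crossing wall. The paper is forced here into an explicit computation in the finite triangle groups (Lemma~\ref{lem:3-generator}). Your proposal contains no argument for (b) or (c), and these constitute the technical core of the proposition.
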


We need some preparatory lemmas.

\begin{lem}
	\label{double}
Let $s,t\in S$ be adjacent, and let $r\in S$ be neither adjacent to $s$ nor to~$t$. Suppose that $r$ and $s$ are in distinct components of $S\setminus(t\cup t^{\perp})$, and that $r$ and~$t$ are in distinct components of $S\setminus(s\cup s^{\perp})$ (in particular $s$ and $t$ do not commute). Let $J=\{s,t\}$. Then each point in (the unique component) $S\setminus(J\cup J^{\perp})$ is neither adjacent to $s$ nor to $t$.
\end{lem}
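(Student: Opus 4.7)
The plan is to argue by contradiction: suppose some $u\in S\setminus(J\cup J^\perp)$ is adjacent to $s$ (the symmetric case where $u$ is adjacent to $t$ is identical). The key standing hypothesis I will exploit is \textbf{2-rigidity} of $S$: since $s$ and $t$ are adjacent and do not commute, $J=\{s,t\}$ is irreducible spherical, so by 2-rigidity $J$ does not weakly separate $S$. As $r\in S\setminus(J\cup J^\perp)$ witnesses nonemptiness, this forces the induced subgraph on $S\setminus(J\cup J^\perp)$ to have exactly one connected component (justifying the phrase ``the unique component'' in the statement), so there is an edge-path $u=u_0,u_1,\ldots,u_n=r$ inside this set.

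Let $C_s$ denote the connected component of $s$ in $S\setminus(t\cup t^\perp)$ and $C_t$ the component of $t$ in $S\setminus(s\cup s^\perp)$; by the two component hypotheses of the lemma, $r\notin C_s\cup C_t$. The heart of the argument is the claim that the whole path $u_0,\ldots,u_n$ lies in $C_s\cup C_t$, which immediately contradicts $u_n=r\notin C_s\cup C_t$.

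To start the induction I would verify $u_0=u\in C_s\cup C_t$ by splitting on whether $u\in s^\perp$ and whether $u\in t^\perp$, using $u\notin J^\perp=s^\perp\cap t^\perp$. If $u\in s^\perp$ then $u\notin t^\perp$, so $u\in S\setminus(t\cup t^\perp)$ and is adjacent (commuting) to $s$ there, giving $u\in C_s$; if $u\notin s^\perp\cup t^\perp$ then again $u\in S\setminus(t\cup t^\perp)$ and is adjacent to $s$ there, so $u\in C_s$; if $u\notin s^\perp$ but $u\in t^\perp$, then $u\in S\setminus(s\cup s^\perp)$ and commutes with $t\in S\setminus(s\cup s^\perp)$, yielding $u\in C_t$. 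For the inductive step with $u_k\in C_s$, either $u_{k+1}\in S\setminus(t\cup t^\perp)$ (hence $u_{k+1}\in C_s$ by adjacency to $u_k$), or $u_{k+1}\in t^\perp$ (since $u_{k+1}\ne t$), in which case $u_{k+1}\notin J^\perp$ forces $u_{k+1}\notin s^\perp$, so $u_{k+1}\in S\setminus(s\cup s^\perp)$ and commutes with $t$, landing in $C_t$. The case $u_k\in C_t$ is symmetric, which closes the induction.

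I do not foresee any serious obstacle; the only point requiring care is the bookkeeping of which of the three types ($s^\perp\setminus t^\perp$, $t^\perp\setminus s^\perp$, or neither) a given vertex of the path belongs to, and verifying that the two possible transitions out of the current component---from $C_s$ into $t^\perp$ and from $C_t$ into $s^\perp$---can only land in $C_t$ and $C_s$ respectively.
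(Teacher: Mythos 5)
Your proof is correct. It uses the same ingredients as the paper's argument — 2-rigidity of $S$ to get connectivity of $S\setminus(J\cup J^\perp)$, both separation hypotheses, and the observation that a vertex in $s^\perp\cap t^\perp$ would lie in $J^\perp$ — but the mechanism is different. The paper runs an extremal argument: it takes a \emph{shortest} path in $S\setminus(J\cup J^\perp)$ from $r$ to a vertex adjacent to $s$ or $t$, shows that this path must meet $s^\perp$ and $t^\perp$, and uses minimality to force both crossings to occur at the terminal vertex, which would then lie in $J^\perp$. You instead propagate an invariant along an \emph{arbitrary} path from the offending vertex $u$ to $r$: every vertex of the path stays in $C_s\cup C_t$ (the components of $s$ in $S\setminus(t\cup t^\perp)$ and of $t$ in $S\setminus(s\cup s^\perp)$), with the only possible transitions being $C_s\to C_t$ through $t^\perp$ and $C_t\to C_s$ through $s^\perp$; since $r$ lies in neither component, this is a contradiction. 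Your version avoids the minimality bookkeeping and the terminal case split $p=p'$, $p''=p$ of the paper, at the cost of a slightly longer base-case analysis; both are equally rigorous, and yours arguably isolates more clearly why the two separation hypotheses are exactly what is needed.
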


\begin{proof}
	Suppose that the collection of vertices of $S\setminus(J\cup J^{\perp})$ that are adjacent to~$s$ or to $t$ is non-empty. Since $S$ is 2-rigid, there is a shortest edge-path $\gamma$ in the subgraph induced on $S\setminus(J\cup J^{\perp})$ that connects $r$ to a vertex $p\in S\setminus(J\cup J^{\perp})$ adjacent to $s$ or $t$. We assume without loss of generality that $p$ is adjacent to $t$. Since $r$ and $t$ are in distinct components of $S\setminus(s\cup s^{\perp})$, there is a vertex $p'$ of $\gamma$ in~$s^{\perp}$. If $p\neq p'$, then the subpath $\gamma'\subseteq\gamma$ from $r$ to $p'$ is a shorter path from $r$ to a vertex adjacent to $s$ or $t$, which is a contradiction. If $p=p'$, then since $r$ and~$s$ are in distinct components of $S\setminus(t\cup t^{\perp})$, there exists a vertex $p''$ of $\gamma'=\gamma$ in $t^{\perp}$. If $p''\neq p$, then we can reach a contradiction as before. If $p''=p$, then $p\in J^{\perp}$, which is impossible by our choice of $\gamma$. 
\end{proof}

\begin{lem}
\label{lem:same side}
	
Let $t,r\in S$ be non-adjacent. Let $J\subset S$ be maximal spherical containing $t$ and let $J_1$ be the maximal irreducible subset of $J$ containing $t$. Let $j_0\in J_1$ and let $\omega=(j_0,j_1,\ldots)$ be the geodesic edge-path in the Coxeter--Dynkin diagram of $J_1$ that starts at $j_0$ and ends at $t$ (such a geodesic is unique since the Coxeter--Dynkin diagram of a spherical subset is a tree). Let $j_n$ be the first vertex of $\omega$ not adjacent to $r$ (possibly $j_n=j_0$ or $j_n=t$). Suppose that both $t,j_0\in J_1$ are good w.r.t.\ $r$. Then we have $\Phi(j_nj_{n-1}\cdots j_1\W_{j_0},E^1_{J,I})=\Phi(j_nj_{n-1}\cdots j_1\W_{j_0},\W_r)$.
\end{lem}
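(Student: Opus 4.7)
The plan is to proceed by induction on $n$, the position in $\omega$ of the first vertex not adjacent to $r$. Before running the induction I would verify that the wall $j_n\cdots j_1\W_{j_0}$ is disjoint from $\W_r$, so that $\Phi(j_n\cdots j_1\W_{j_0},\W_r)$ is well-defined; this should follow from the fact that $\{j_n,r\}$ is not spherical together with the spherical structure of $\langle j_0,\ldots,j_n\rangle\le W_{J_1}$, which forces $\langle w j_0 w^{-1},r\rangle$ to be infinite (where $w=j_n\cdots j_1$), hence $w\W_{j_0}$ and $\W_r$ to be disjoint.

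For the base case $n=0$ the element $j_0=j_n$ is not adjacent to $r$, $w$ is trivial, and the wall in question is simply $\W_{j_0}$. If $j_0=t$ the equality is immediate from $E^1_{J,I}=D_J\cap\Phi(\W_t,\W_r)$. The nontrivial subcase is $j_0\neq t$: since $\omega$ is a geodesic in the Coxeter--Dynkin diagram of $J_1$, its next vertex $j_1$ is adjacent to $j_0$ there, hence non-commuting, so $j_1\in J_1\setminus(j_0\cup j_0^\perp)$. The goodness of $j_0$ then places $j_1$ in the same component of $S\setminus(j_0\cup j_0^\perp)$ as $r$; walking along an edge-path in this component from $j_1$ to $r$ and iteratively invoking Lemma~\ref{double} should transfer the half-space information from $\W_t$ to $\W_{j_0}$, while the goodness of $t$ pins down which vertex of $D_{J_1}$ is the selected factor of $E^1_{J,I}$.

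The inductive step $n\to n-1$ passes to the truncated geodesic $\omega'=(j_1,\ldots,j_N)$, whose first vertex not adjacent to $r$ is $j_n$ at position $n-1$. One first checks that $j_1$ is good with respect to $r$: this is automatic for $n\ge 2$ since $j_1$ is then adjacent to $r$, while for $n=1$ it requires an auxiliary argument using $\omega$ together with the goodness of $t$ and $j_0$. The inductive hypothesis then yields
\[
\Phi(j_n\cdots j_2\W_{j_1},E^1_{J,I})=\Phi(j_n\cdots j_2\W_{j_1},\W_r).
\]
To transfer this to the wall $j_n\cdots j_1\W_{j_0}=(j_n\cdots j_2)\W_{j_1 j_0 j_1}$, I would compare $\W_{j_1}$ and $\W_{j_1 j_0 j_1}$ inside the finite dihedral subgroup $\langle j_0,j_1\rangle\le W_{J_1}$ acting on $\sigma_{J_1}$: both are walls through $\sigma_{J_1}$ separating the two vertices of $D_{J_1}$, and one then translates the relation by $j_n\cdots j_2$.

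The main obstacle is the base case $n=0$ with $j_0\neq t$: it is the sole step where the combinatorial goodness hypotheses on both $j_0$ and $t$ must directly yield the geometric half-space relation without recourse to an inductive hypothesis. A secondary technical point is the case $n=1$ of the inductive step, where one has to establish the goodness of $j_1$ by hand. Both come down to carefully reconciling the combinatorial component structure of $S\setminus(s\cup s^\perp)$ (for various $s$) with the half-space geometry of $\Da$, and this is what I would expect to be the most delicate part of the argument.
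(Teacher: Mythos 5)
Your proposal has genuine gaps, both structural and substantive. First, the choice of induction variable is unhelpful: inducting on $n$ (the position of the first vertex of $\omega$ not adjacent to $r$) dumps essentially the entire content of the lemma into the base case $n=0$ with $j_0\neq t$, which you leave as a sketch. Your proposed mechanism there --- ``iteratively invoking Lemma~\ref{double}'' to ``transfer the half-space information from $\W_t$ to $\W_{j_0}$'' --- cannot work, because Lemma~\ref{double} is a purely combinatorial statement about the defining graph and says nothing about half-spaces in $\Da$. The actual transfer requires the marking machinery: goodness plus $2$-rigidity feed into Proposition~\ref{lem:compactible1} to give equivalences of markings such as $(j_1,r)\sim((j_1,j_0),r)$, hence equalities of half-spaces via Lemma~\ref{moves do not change half-space}; one further needs Lemma~\ref{lem:spherical same side}, the geometricity of $\{j_0,\ldots,j_n,r\}$ via Proposition~\ref{lem:compactible2} when both $j_0,j_1$ are adjacent to $r$, and sector arguments in $\Da$. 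None of these tools appear in your outline. The paper instead inducts on the length of the geodesic from $j_0$ to $t$, which lets it recurse inside exactly the situations you have frozen into your base case.

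Second, your inductive step breaks when $n=1$ and $j_1$ is not good with respect to $r$: no ``auxiliary argument'' can establish goodness of $j_1$, because it can genuinely fail (the claim in the paper only guarantees that of two non-commuting elements of $J_1$ at least \emph{one} is good, and that one may be $j_0$). The paper handles this by skipping over $j_1$ to $j_2$, and closing the resulting gap requires Lemma~\ref{lem:3-generator}, a nontrivial geometric fact about rank-$3$ spherical Coxeter groups verified by inspecting the wall tilings of the associated polytopes. Your proposal contains no substitute for this step. Finally, the ``comparison inside the finite dihedral subgroup $\langle j_0,j_1\rangle$ acting on $\sigma_{J_1}$'' in your inductive step does not engage with the position of $\W_r$, which is the entire point: knowing $E^1_{J,I}$ and $\W_r$ lie on the same side of $j_n\cdots j_2\W_{j_1}$ does not formally yield the same statement for $j_n\cdots j_1\W_{j_0}$ without the case analysis and the external input listed above.
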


\begin{proof}
We write $E=E^1_{J,I}$ to shorten the notation.

We claim that for any non-commuting $j,j'\in J_1$ at least one of $j,j'$ is good (w.r.t.\ $r$; we will skip repeating this in this proof). To justify the claim, if both $j$ and $j'$ are not good, then 
$r$ and $j$ are in distinct components of $S\setminus(j'\cup {j'}^{\perp})$, and $r$ and $j'$ are in distinct components of $S\setminus(j\cup j^{\perp})$. If $\{j,j'\}\subsetneq J_1$, then there is an element in $S\setminus(\{j,j'\}\cup\{j,j'\}^{\perp})$ adjacent to $j$ and $j'$, which contradicts Lemma~\ref{double}. If $\{j,j'\}=J_1$, then one of $j,j'$ equals $t$, which was assumed to be good, contradiction. This justifies the claim.
	
If $j_0=t$, then there is nothing to prove. Otherwise, we induct on the length of~$\omega$ and assume that the conclusion of the lemma holds for all good $j_i$ distinct from~$j_0$.  By the claim either $j_1$ or $j_2$ is good. We look first at the situation where $j_1$ is good. There are four cases to consider.
	
\smallskip\noindent \textbf{Case 1: both $j_0$ and $j_1$ are not adjacent to $r$.} Since both $j_0$ and $j_1$ are good, we deduce from Proposition~\ref{lem:compactible1} and the assumption that $S$ is 2-rigid that $(j_1,r)\sim((j_1,j_0),r)$ and $(j_0,r)\sim((j_0,j_1),r)$. Let $\Sigma\subset \Da$ be the union of the two sectors of the form $\Phi_{j_0}\cap \Phi_{j_1}$ for $\{\Phi_{j_0}, \Phi_{j_1}\}$ geometric.
Then $(j_1,r)\sim((j_1,j_0),r)$ implies $\W_r\subset \Sigma\cup j_0\Sigma$ and $(j_0,r)\sim((j_0,j_1),r)$ implies $\W_r\subset \Sigma\cup j_1\Sigma$. Thus $\W_r\subset \Sigma$. By induction assumption, $\Phi(\W_{j_1},E)=\Phi(\W_{j_1},\W_r)$, thus $E$ and $\W_r$ are in the same sector of $\Sigma$, and it follows that $\Phi(\W_{j_0},E)=\Phi(\W_{j_0},\W_r)$.
	
\smallskip\noindent \textbf{Case 2: $j_1$ is adjacent to $r$, but $j_0$ is not adjacent to $r$.} Then $n=0$. Let $j_m$ be the first vertex of $\omega$ distinct from $j_0$ not adjacent to $r$. 
 
First, we claim $\Phi(j_0\W_{j_1},E)=\Phi(j_0\W_{j_1},\W_r)$. Indeed, since  $(j_1,j_0)$ and $(j_1,j_2\cdots j_m)$ are bases, by two applications of Lemma~\ref{lem:spherical same side} we have
$$ \Phi(\W_{j_1},j_0.E)=\Phi(\W_{j_1}, E)=\Phi(\W_{j_1},j_2\cdots j_m. E),$$
which equals $\Phi(\W_{j_1},j_2\cdots j_m\W_r)$ by induction. Furthermore, $((j_1,j_2\cdots j_m),r)$ is a marking and $j_2$ is adjacent to $j_0$. Thus by Proposition~\ref{lem:compactible1} and the fact that $S$ is 2-rigid, we obtain
$$ ((j_1,j_2\cdots j_m),r) \sim ((j_1,j_0),r), $$
and the claim follows. 

Let $\Phi_{j_0}, \Phi_{j_1}$ be the half-spaces for $j_0,j_1$ containing $E$ and let $\Lambda=\Phi_{j_0}\cap\Phi_{j_1}$. Since $\W_r$ intersects $\W_{j_1}$, by the claim we have that $\W_r$ intersects $\Lambda$.
It follows that $\Phi(\W_{j_0},E)=\Phi(\W_{j_0},\W_r)$.
	
\smallskip\noindent \textbf{Case 3: $j_0$ is adjacent to $r$, but $j_1$ is not adjacent to $r$.} By induction, we have $\Phi(\W_{j_1},E)=\Phi(\W_{j_1},\W_r)$. We need to show $\Phi(j_1\W_{j_0},E)=\Phi(j_1\W_{j_0},\W_r)$. To do this, it suffices to reverse the argument in the previous paragraph.
	
\smallskip\noindent \textbf{Case 4: both $j_0$ and $j_1$ are adjacent to $r$.} Let $P=\{j_0,j_1,\ldots,j_n,r\}$. We claim that $P$ is geometric. Indeed, since $P$ is irreducible and non-spherical, to justify the claim it suffices to verify the hypothesis of Proposition~\ref{lem:compactible2}. Let $L\subset S$ be irreducible spherical with $L\cap P\neq\emptyset$. Since $S$ is 2-rigid, it suffices to consider $L=\{l\}$ a singleton in $P$. Note that in $P$ the only two non-adjacent elements are $r$ and $j_n$. Thus the cases $l=r,j_n$ are clear. It remains to consider the case $l\in K=P\setminus \{r,j_n\}$. Since $K$ is irreducible and $|K|\geq 2$, we have $K\setminus (l\cup l^{\perp})\neq\emptyset$. Consequently, $l$ does not weakly separate $P$, verifying the claim.
	
By Remark~\ref{geometric}, there are half-spaces $\{\Phi_{j_0},\Phi_{j_1},\cdots,\Phi_{j_n},\Phi_r\}$ whose intersection contains a vertex $x$ incident to $\W_r$. Thus by induction we have $$\Phi(j_nj_{n-1}\cdots j_2\W_{j_1},E)=\Phi(j_nj_{n-1}\cdots j_2\W_{j_1},\W_r)=\Phi(j_nj_{n-1}\cdots j_2\W_{j_1},x).$$ Let $F$ and $F_{\mathrm{ant}}$ (resp.\ $V$ and $V_{\mathrm{ant}}$) be the two fundamental domains for $\{j_0,j_1,\ldots,j_n\}$ (resp. $\{j_1,\ldots,j_n\}$) from Corollary~\ref{cor:spherical conjugate}. Assume without loss of generality $F\subset V$. Then $x$ and $E$ are both inside $F$ or $F_{\mathrm{ant}}$, say $F$, otherwise they would be separated by $j_nj_{n-1}\cdots j_2\W_{j_1}$. It follows that both $x$ and $E$ are in $V$. In particular, $\Phi(j_nj_{n-1}\cdots j_1\W_{j_0},E)=\Phi(j_nj_{n-1}\cdots j_1\W_{j_0},x)$, which equals $\Phi(j_nj_{n-1}\cdots j_1\W_{j_0},\W_r)$, as desired.
	
Now we turn to the situation where $j_1$ is not good, hence $j_2$ is good. Since $j_1$ is not good, it is not adjacent to $r$,
and furthermore $r$ is not adjacent to $j_0$ or~$j_2$. Since $j_0$ is good and $S$ is 2-rigid, by 
Proposition~\ref{lem:compactible1} we obtain $\Phi(\W_{j_0},\W_r)=\Phi(\W_{j_0},j_1\W_r)=\Phi(\W_{j_0},j_1j_2\W_r)$. Similarly, $\Phi(\W_{j_2},\W_r)=\Phi(\W_{j_2},j_1\W_r)=\Phi(\W_{j_2},j_1j_0\W_r)$. Since $\{j_0,j_1,j_2\}$ is conjugate to a subset of $S'$ by Corollary~\ref{cor:spherical conjugate}, we deduce that $\Phi(\W_{j_0},E)=\Phi(\W_{j_0},\W_r)$ by applying Lemma~\ref{lem:3-generator} below with $s_1=j_0$, $s_2=j_1$ and $s_3=j_2$.
\end{proof}

\begin{figure}
\includegraphics[scale=0.68]{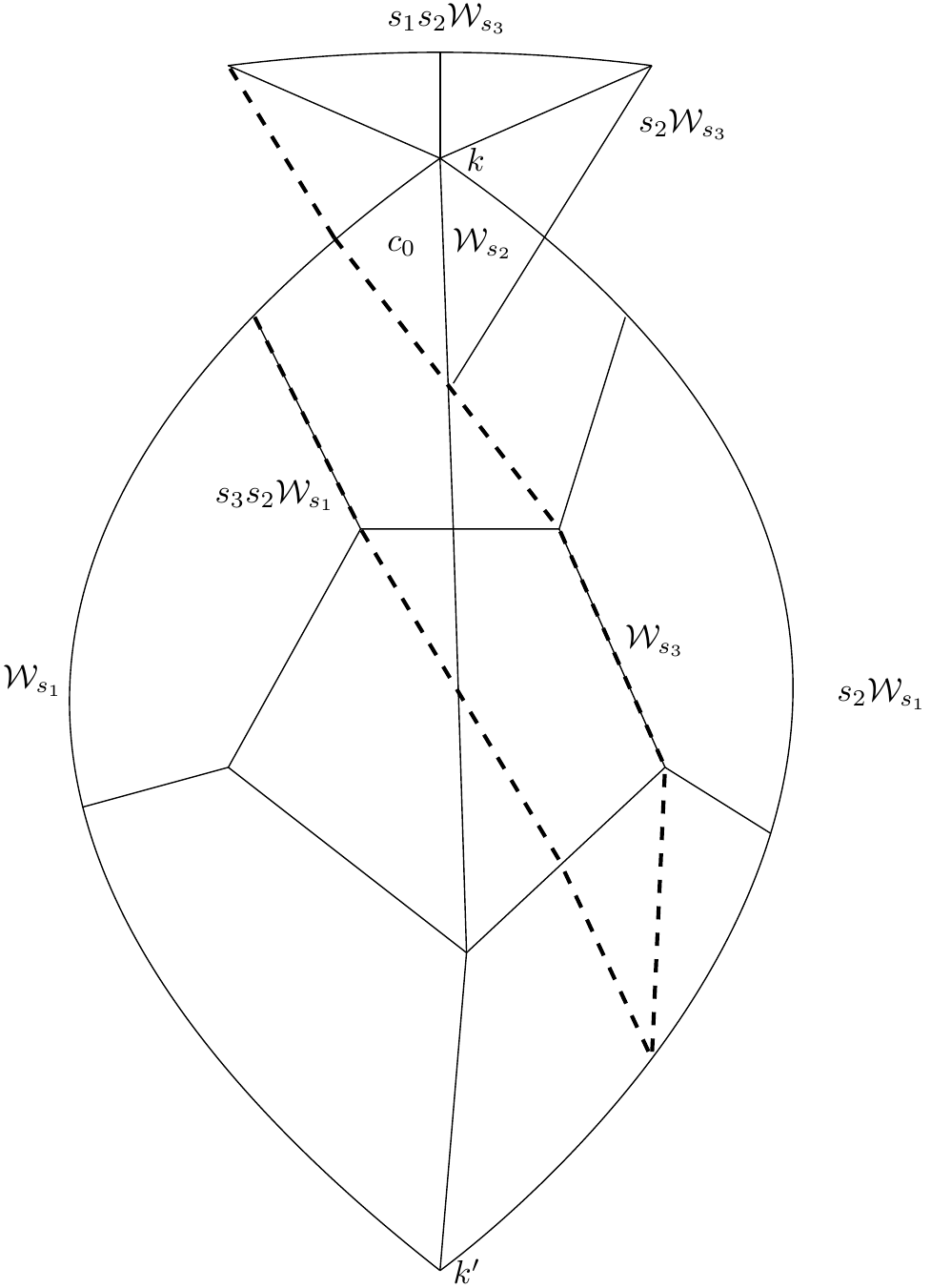}	
\caption{$(2,3,5)$-triangle group}
\label{fig1}
\end{figure}

\begin{lem}
	\label{lem:3-generator}
Let $H$ be a 3-generator irreducible spherical Coxeter group and let $\sigma$ be the associated Davis cell. Let $s_1,s_2,s_3$ be consecutive vertices in the Coxeter--Dynkin diagram of $H$, and let $\W_{s_1}$, $\W_{s_2}$ and $\W_{s_3}$ be the associated walls of $\sigma$. Let $c_0$ be a chamber of $\sigma$ that is incident to each of $\W_{s_i}$ for $1\le i\le 3$. Let $c$ be an arbitrary chamber satisfying all of the following.
\begin{enumerate}
	\item $\Phi(\W_{s_1},c)=\Phi(\W_{s_1},c_0)$;
	\item $\Phi(\W_{s_1},c)=\Phi(\W_{s_1},s_2c)=\Phi(\W_{s_1},s_2s_3c)$;
	\item $\Phi(\W_{s_3},c)=\Phi(\W_{s_3},s_2c)=\Phi(\W_{s_3},s_2s_1c)$.
\end{enumerate}
Then $\Phi(\W_{s_3},c)=\Phi(\W_{s_3},c_0)$.
\end{lem}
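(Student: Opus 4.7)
My plan is to work in the geometric reflection representation of the finite Coxeter group $H$ on $\R^3$, converting the hypotheses and the conclusion into explicit linear inequalities in simple-root coordinates. Let $\alpha_1,\alpha_2,\alpha_3$ be the simple roots, equipped with the positive-definite bilinear form $\langle\alpha_i,\alpha_i\rangle=1$ and $\langle\alpha_i,\alpha_j\rangle=-\cos(\pi/m_{s_is_j})$; since $s_1,s_3$ commute we have $\langle\alpha_1,\alpha_3\rangle=0$. Abbreviate $a:=\cos(\pi/m_{s_1s_2})$ and $b:=\cos(\pi/m_{s_2s_3})$. Each chamber of $\sigma$ corresponds to a point $v\in\R^3$ in the $H$-orbit of an interior fundamental chamber point. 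After possibly replacing each $\alpha_i$ by $-\alpha_i$ (which swaps the two chambers of $\sigma$ incident to all three walls $\W_{s_i}$), I may assume $\langle c_0,\alpha_i\rangle>0$ for $i=1,2,3$. Writing $p_i:=\langle v,\alpha_i\rangle$ for the point $v$ associated to $c$, the reflection formula $\langle s_j v,\alpha_i\rangle=p_i-2\langle\alpha_i,\alpha_j\rangle p_j$ applied iteratively yields
\[\langle s_2 c,\alpha_1\rangle=p_1+2ap_2,\qquad \langle s_2s_3c,\alpha_1\rangle=p_1+2ap_2+4ab\,p_3,\]
\[\langle s_2c,\alpha_3\rangle=p_3+2bp_2,\qquad \langle s_2s_1c,\alpha_3\rangle=p_3+2bp_2+4ab\,p_1.\]

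Next I would observe that hypotheses (1) and (2) become the three strict inequalities $p_1>0$, $p_1+2ap_2>0$, and $p_1+2ap_2+4ab\,p_3>0$, while (3) says that $p_3$, $p_3+2bp_2$, and $p_3+2bp_2+4ab\,p_1$ share a common strict sign; the target is $p_3>0$. I argue by contradiction, assuming $p_3<0$ (note $p_3\neq 0$, as chamber vertices never lie on $\W_{s_3}$). Then (3) forces $p_3+2bp_2+4ab\,p_1<0$. Subtracting $\tfrac{a}{b}$ times this strictly negative quantity from the strictly positive quantity $p_1+2ap_2+4ab\,p_3$ cancels the $p_2$-term and produces
\[(1-4a^2)\,p_1+\tfrac{a(4b^2-1)}{b}\,p_3\;>\;0.\]
Since $m_{s_1s_2},m_{s_2s_3}\geq 3$ we have $a,b\geq 1/2$, so $1-4a^2\leq 0$ and $4b^2-1\geq 0$; combined with $p_1>0$ and $p_3<0$, each summand on the left is $\leq 0$, which is the required contradiction. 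Hence $p_3>0$.

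The step I expect to be the crux is identifying the coefficient $a/b$ in the linear combination: it is the unique factor that annihilates $p_2$ in the third inequalities coming from (2) and (3), and it has the key property that the surviving $p_1$ and $p_3$ coefficients become simultaneously non-positive and non-negative under $m_{ij}\geq 3$, forcing the contradiction. The uniform bound $\cos(\pi/m)\geq 1/2$ for $m\geq 3$ then lets the same argument handle all three possibilities $H\in\{A_3, B_3, H_3\}$ without any case analysis.
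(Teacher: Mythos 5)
Your argument is correct, but it takes a genuinely different route from the paper. The paper's proof is a case analysis over the three possible groups $H$ (the $(2,3,3)$-, $(2,3,4)$- and $(2,3,5)$-triangle groups), chasing regions in explicit pictures of the wall tilings of the associated $2$-sphere, and it actually establishes the stronger statement that $c=c_0$ or $c=s_2.c_0$. You replace this with a single uniform computation in the reflection representation. I verified the reflection formulas and the sign bookkeeping: with $\langle\alpha_1,\alpha_3\rangle=0$ one indeed gets $\langle s_2s_3c,\alpha_1\rangle=p_1+2ap_2+4ab\,p_3$ and $\langle s_2s_1c,\alpha_3\rangle=p_3+2bp_2+4ab\,p_1$; the normalisation $\langle c_0,\alpha_i\rangle>0$ is legitimate because the two chambers of $\sigma$ incident to all three walls lie on the all-positive and the all-negative side, respectively; and eliminating $p_2$ with the coefficient $a/b$ turns the two surviving strict inequalities into $(1-4a^2)p_1+\tfrac{a(4b^2-1)}{b}p_3>0$, which is incompatible with $a,b\ge\tfrac12$, $p_1>0$, $p_3<0$. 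Your approach buys a figure-free, case-free verification that works for all of $A_3$, $B_3$, $H_3$ at once (it uses only $m_{s_1s_2},m_{s_2s_3}\ge 3$ and $m_{s_1s_3}=2$, and does not even need the middle inequalities of hypotheses (2) and (3)); what it gives up is the exact determination of $c$, which however is not needed for the application in Lemma~\ref{lem:same side}.
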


\begin{figure}
	\centering
	\begin{subfigure}{.5\textwidth}
	    \centering
	 	\includegraphics[scale=0.6]{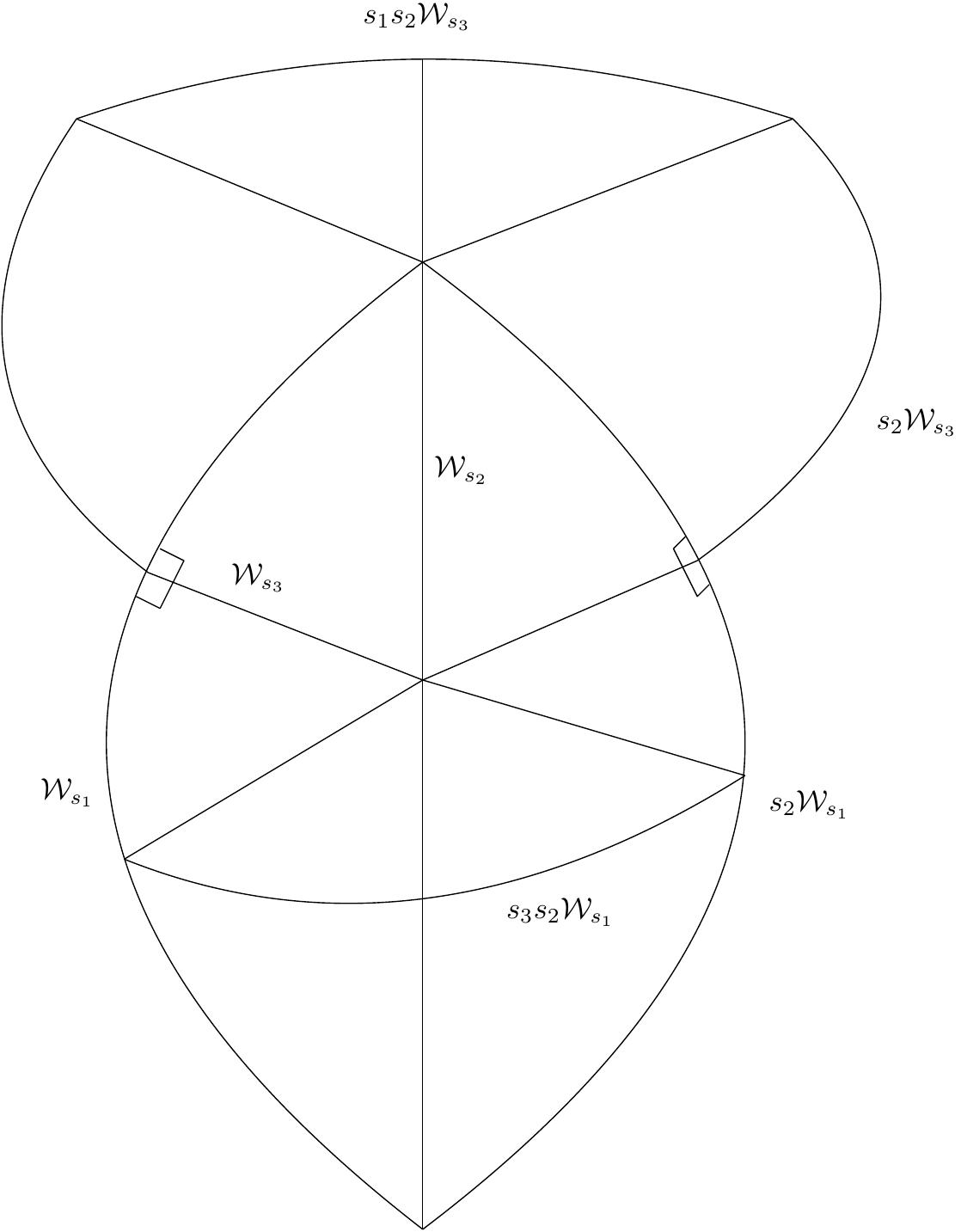}
	 	\caption{$(2,3,3)$}
		 \label{fig2}
	\end{subfigure}%
	\begin{subfigure}{.5\textwidth}
	   	\centering
		\includegraphics[scale=0.6]{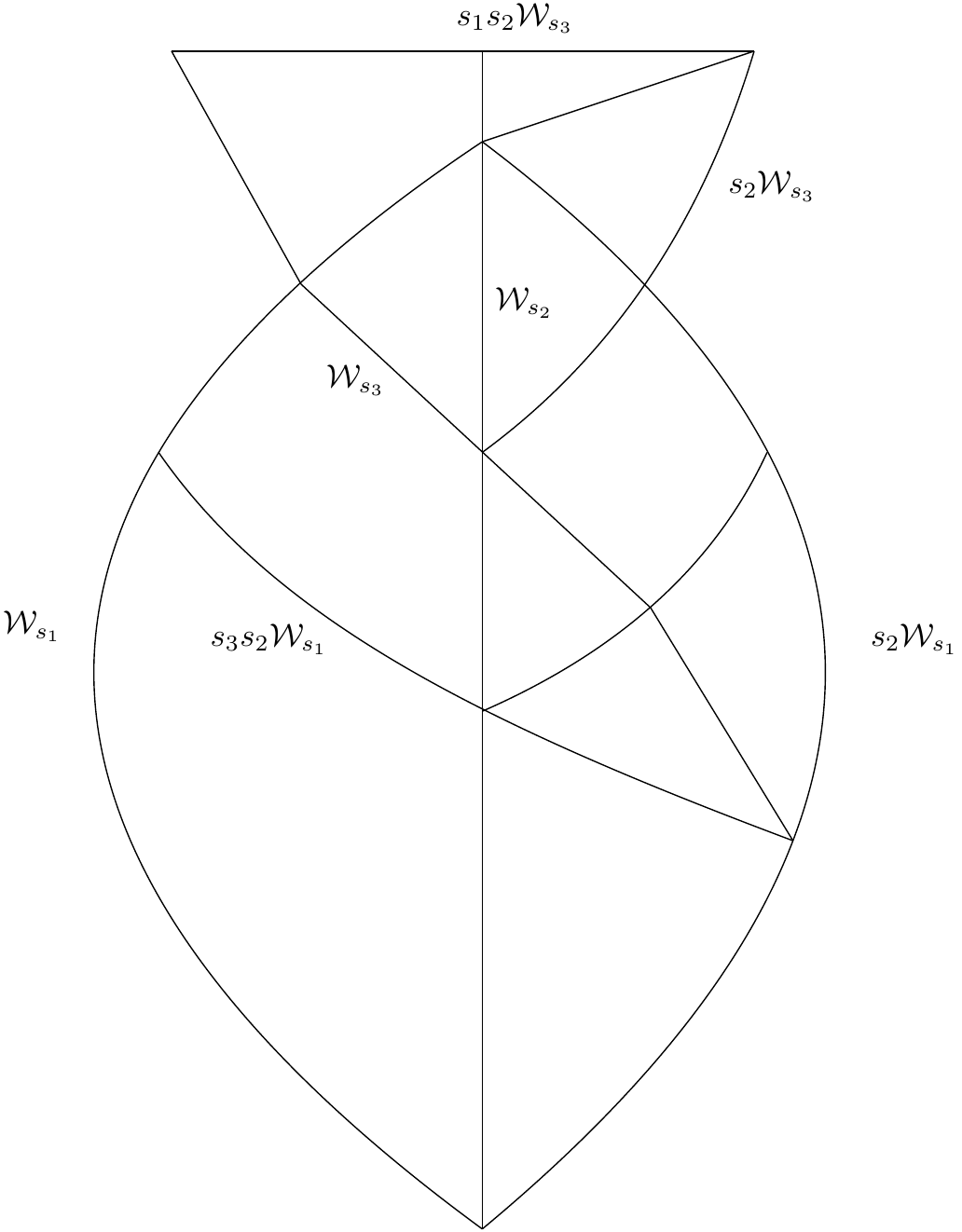}
		\caption{$(2,3,4)$}
		 \label{fig3}
	\end{subfigure}
\end{figure}

\begin{proof}
We will prove that either $c=c_0$, or $c=s_2.c_0$, which implies immediately the lemma. We first consider the case where $H$ is the $(2,3,5)$-triangle group. Assume first $(s_2s_3)^5=1$. Consider the tilling of the regular dodecahedron obtained from drawing all the walls of $H$. A direct computation gives 
Figure~\ref{fig1}. It follows from Conditions (1) and (2) that $c\in \Phi(\W_{s_1},c_0)\cap s_2\Phi(\W_{s_1},c_0)\cap s_3s_2\Phi(\W_{s_1},c_0)$. In other words, $c$ is inside the region $R_1$ bounded by $\W_{s_1}$, $s_2\W_{s_1}$ and $s_3s_2\W_{s_1}$ containing~$c_0$. Similarly, by Condition (3), $c$ is inside the region $R_2$ bounded by $\W_{s_3}$, $s_2\W_{s_3}$ and $s_1s_2\W_{s_3}$ that contains either $c_0$ or its antipodal chamber. 
However, the latter case is impossible since $R_1\cap R_2$ contains $c$ and is thus non-empty. Thus $R_1\cap R_2$ is the union of the two triangles adjacent along $\W_{s_2}$, one of which contains $c_0$, as desired. 

If $(s_2s_3)^3=1$, the proof is analogous. The case of the $(2,3,3)$-triangle group and the $(2,3,4)$-triangle group can be proved in a similar way, see Figures~\ref{fig2} and~\ref{fig3} to chase down the relevant regions.
\end{proof}

We are finally ready for the following.

\begin{proof}[Proof of Proposition~\ref{prop:consistent}]
We prove the proposition by induction on the distance between $t$ and $t'$ in the Coxeter--Dynkin diagram of $J_1$. If $t=t'$, then since $\W_{r}\cap\W_{r'}\neq\emptyset$, the proposition is clear. Also note that if $r=r'$, then the proposition follows from Lemma~\ref{lem:same side}. 

Now we assume $t\neq t'$ and $r\neq r'$. If $t$ and~$r'$ are non-adjacent, then $t$ is good with respect to $r'$ (since $r$ and $r'$ are adjacent). Thus we can pass from $(t,r)$ to $(t',r')$ via $(t,r')$ by the previous discussion. The case where $t'$ and~$r$ are non-adjacent is analogous. Thus it remains to consider the case where $t$ and~$r'$ are adjacent, and $t'$ and $r$ are adjacent.

We first look at the distance one case: where $t$ and $t'$ do not commute. We consider $P=\{t,t',r,r'\}$. Note that the defining graph of $P$ is a square, thus $P$ is 1-rigid. Hence $P$ is geometric by Corollary~\ref{thm:twist rigid geometric}. Let $F\subset \Da^{(0)}$ be the fundamental domain for $\langle P\rangle\acts \Da^{(0)}$ from Remark~\ref{geometric}. Let $V\subset \Da^{(0)}$ be the fundamental domain for $\langle t,t'\rangle$ that contains $F$. Since $t$ and $t'$ do not commute, $V$ is the only fundamental domain for $\langle t,t'\rangle$ contained in $\Phi(\W_{t},\W_{r})$ and the only one in $\Phi(\W_{t'},\W_{r'})$. Thus $E^1_{J,I}\subset V$ and ${E^1_{J,I}}'\subset V$. It follows that $E^1_{J,I}={E^1_{J,I}}'$.

Now we deal with the general situation. We consider the geodesic edge-path $(t_i)_{i=0}^n$ from $t_0=t$ to $t_n=t'$ in the Coxeter--Dynkin diagram of $J_1$ (which is a tree). Let $i'$ be minimal such that $t_{i'}$ is not adjacent to $r'$ and $i$ maximal such that $t_{i}$ is not adjacent to $r$. Then $t_{i'}$ is good respect to $r'$ and $t_{i}$ is good with respect to~$r$. Note that $i'\ge 1$ and $i\le n-1$. If $i'\le n-1$, then by the induction assumption we can pass from $(t,r)$ to $(t',r')$ via $(t_{i'},r')$. The case $i\geq 1$ is analogous. Thus in the remaining part of the proof we assume $i'=n$ and $i=0$, in other words, $t_i$ is adjacent to both $r$ and $r'$ for each $1\le i\le n-1$. 

Let $P=\{t_0,\ldots,t_n,r,r'\}$. Note that the defining graph of $P$ is a join of a 4-cycle (whose consecutive vertices are $t,r',r,t'$) and a complete graph (whose vertices are $t_1,\ldots,t_{n-1}$). Since $(t_i)$ was an edge-path in the Coxeter--Dynkin diagram, it is easy to prove that the defining graph of $P$ is 1-rigid. Thus $P$ is geometric by Corollary~\ref{thm:twist rigid geometric}. Let $F\subset \Da^{(0)}$ be the fundamental domain for $\langle P\rangle\acts \Da^{(0)}$ from Remark~\ref{geometric}. Let $V\subset \Da^{(0)}$ be the fundamental domain for $\langle t_0,\ldots,t_n\rangle$ that contains $F$. Since $\{t_0,\ldots,t_n\}$ is irreducible, $V$ is the only fundamental domain for $\langle t_0,\ldots,t_n\rangle$ contained in $\Phi(\W_{t},\W_{r})$ and the only one in $\Phi(\W_{t'},\W_{r'})$. Thus $E^1_{J,I}\subset V$ and ${E^1_{J,I}}'\subset V$. Hence $E^1_{J,I}={E^1_{J,I}}'$.\end{proof}

\section{Proof of the main theorem}
\label{sec:proof}
We keep the setup from Section~\ref{subsec:relative position}.

\begin{defn}
	\label{def:complexity function}
We define the \emph{complexity} of $S$, denoted $\K(S)$, to be an ordered pair of numbers 
	\begin{center}
		$\big(\K_1(S),\K_2(S)\big)=\Big(\sum_{J\neq I}d(C_J,C_{I}),\sum_{J\neq I}d(E_{J,I},E_{I,J})\Big)$,
	\end{center}
where $J$ and $I$ range over all maximal spherical subsets of $S$, and $E_{J,I}$ is defined in Definition \ref{def:E}. Note that the distance is computed in $\Da$ and so we have $\K_1(S')=\K_2(S')=0$.
	
For two Coxeter generating sets $S$ and $S_\tau$, we define $\K(S_\tau)<\K(S)$ if $\K_1(S_\tau)<\K_1(S)$, or $\K_1(S_\tau)=\K_1(S)$ and $\K_2(S_\tau)<\K_2(S)$.
\end{defn}

Note that since $S$ is 2-rigid, an elementary twist does not change its defining graph. 
Thus Main Theorem reduces to the following.

\begin{thm}
\label{thm:minimal complexity}
Let $S$ be angle-compatible with $S'$. Suppose that $S$ is 2-rigid and of type~$\mathrm{FC}$. Assume moreover that $S$ has minimal complexity among all Coxeter generating sets twist-equivalent to $S$. Then $S$ is conjugate to $S'$.
\end{thm}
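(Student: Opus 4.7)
The plan is to argue by contradiction. Suppose $S$ is not conjugate to $S'$. Then by Corollary~\ref{cor:geometric criterion} there exist $s\in S$ and markings $\mu_1=((s,J_1),m_1)$, $\mu_2=((s,J_2),m_2)$ with common core satisfying $\Phi_s^{\mu_1}\neq\Phi_s^{\mu_2}$. Applying Proposition~\ref{lem:compactible1} with $I=\{s\}$ — whose ``no $I'\supsetneq\{s\}$ weakly separates'' hypothesis is automatic from 2-rigidity — the contrapositive forces the sets $K_1$ and $K_2$ from that proposition to lie in distinct components of $S\setminus(\{s\}\cup\{s\}^\perp)$. In particular $\{s\}$ weakly separates $S$, so an elementary twist $\tau$ in $\langle s\rangle$ is available; I would choose the partition $S\setminus(\{s\}\cup\{s\}^\perp)=A\sqcup B$ so that $K_1\subset A$ and $K_2\subset B$, and set $S_\tau=\tau(S)$.

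The core task is to show $\K(S_\tau)<\K(S)$, contradicting the hypothesised minimality. Because $A$ and $B$ have no adjacencies, every maximal spherical $J\subset S$ lies entirely in $A\cup\{s\}\cup\{s\}^\perp$ or entirely in $B\cup\{s\}\cup\{s\}^\perp$. In the first case $\tau(J)=J$ and $\sigma_{\tau(J)}=\sigma_J$ in $\Da$; in the second $W_{\tau(J)}=sW_Js$ and therefore $\sigma_{\tau(J)}=s.\sigma_J$. Thus $\tau$ reflects every ``$B$-side'' maximal spherical cell across $\W_s$ while leaving ``$A$-side'' cells fixed. I would then check, using Theorem~\ref{lem:residue} to project onto residues, that $d(C_{\tau(J)},C_{\tau(I)})\le d(C_J,C_I)$ for every pair, so $\K_1$ is non-increasing; and identify a witness pair $(J,I)$ at which either $\K_1$ strictly decreases or $\K_1$ is unchanged but $d(E_{\tau(J),\tau(I)},E_{\tau(I),\tau(J)})<d(E_{J,I},E_{I,J})$. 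The witness pair is obtained by extending $J_1$ (resp.\ $J_2$) to a maximal spherical $J$ (resp.\ $I$) with $J\ni s$ lying in $A\cup\{s\}\cup\{s\}^\perp$ and $I\ni s$ lying in $B\cup\{s\}\cup\{s\}^\perp$; such extensions exist because $S$ is of type FC. The condition $\Phi_s^{\mu_1}\neq\Phi_s^{\mu_2}$, together with the freedom afforded by Proposition~\ref{prop:consistent} to compute $E_{J,I}$ and $E_{I,J}$ via good pairs involving $K_1$ and $K_2$ respectively, shows that currently these two $E$-sets lie on opposite sides of $\W_s$; reflecting the $B$-side cell by $s$ realigns them, producing the required strict decrease.

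The main obstacle is the quantitative pair-by-pair control. One has to verify that reflecting across $\W_s$ cannot increase any contribution to $\K_1$ or $\K_2$ — including ``mixed'' pairs where one cell is on the $A$-side and the other on the $B$-side, where the analysis is subtle and draws on the half-space description of markings from Section~\ref{sec:subs} — and to extract an honest strict inequality at the witness pair. The tools for this are the half-space interpretation of Definition~\ref{half-space}, the well-definedness of $E_{J,I}$ from Proposition~\ref{prop:consistent}, and the residue projection Theorem~\ref{lem:residue}, used in tandem with careful bookkeeping of which side of $\W_s$ each relevant cell and each relevant $E$-subset occupies. Once this geometric bookkeeping is executed, the contradiction with minimality of $\K(S)$ closes the argument.
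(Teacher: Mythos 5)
Your opening moves match the paper: reduce via Corollary~\ref{cor:geometric criterion} to comparing markings with a common core $s$, apply Proposition~\ref{lem:compactible1} with $I=\{s\}$ (legitimate under $2$-rigidity) to see that incompatible markings force their $K$-sets into distinct components of $S\setminus(s\cup s^{\perp})$, and then try to decrease the complexity by an elementary twist in $\langle s\rangle$. This is exactly how the paper handles two of its three cases: when the incompatible components are both \emph{big} (each contains an element not adjacent to $s$), the twist strictly decreases $\K_1$; when a small, non-exposed component is involved, $\K_1$ is unchanged and $\K_2$ strictly decreases via the $E_{J,I}$ bookkeeping you describe. (One local inaccuracy: your witness pair with $s\in J$ and $s\in I$ cannot produce a $\K_1$ decrease, since then $C_{\tau(J)}=C_J$ and $C_{\tau(I)}=C_I$; in the big-component case the witness has $s\notin J$ and $s\notin I$.)

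The genuine gap is the \emph{exposed} case (Definition~\ref{defn:type I}): a small component $\{t\}$ for which some $r$ in another component lies in a different component of $S\setminus(t\cup t^{\perp})$ than $s$. There your single twist in $\langle s\rangle$ need not decrease $\K(S)$ at all. The reason is precisely the object you invoke: by Lemma~\ref{double}, $\{s,t\}$ is then a maximal irreducible subset of every exposed maximal spherical $J$, and it is \emph{not good} with respect to $I$, so in the $\{s,t\}$-coordinate $E_{J,I}$ is all of $D_J$ — it contains a full antipodal pair of the $\{s,t\}$-residue and is not pinned to one side of $\W_s$. Reflecting by $s$ then gives no strict decrease in $\K_2$ (and $\K_1$ is unchanged since every such $J$ contains $s$). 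The paper has to treat this case separately (Lemma~\ref{lem:small comp}): either the region $\Lambda$ containing all the other walls already lies in the union of sectors $\Sigma$, in which case $\{t\}$ is in fact compatible with a big component and there is nothing to twist, or one must compose elementary twists in $\langle s\rangle$ \emph{and} $\langle t\rangle$ to realize conjugation by a general element $w$ of the dihedral group $\langle s,t\rangle$ chosen so that $w\Sigma\supset\Lambda$, and only then does $\K_2$ drop. Without this case split and the dihedral composition of twists, your argument does not close.
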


The proof will take the remaining part of the article, and we divide it into several steps.
For $\mu=((s,w),m)$ a marking with support $J$, we define $K_{\mu}=J\setminus\{s\}$ if $J\neq\{s\}$, and $K_\mu=\{m\}$ otherwise.

By Corollary~\ref{cor:geometric criterion}, to prove Theorem~\ref{thm:minimal complexity} it suffices to show that for any markings $\mu$ and $\mu'$ with common core $s\in S$, we have $\Phi^{\mu}_s=\Phi^{\mu'}_s$. Note that for each component $A$ of $S\setminus(s\cup s^{\perp})$, there exists a marking $\mu$ with $K_\mu\subseteq A$. By Proposition~\ref{lem:compactible1} and the fact that $S$ is 2-rigid, if $K_{\mu'}\subseteq A$, then $\Phi^{\mu}_s=\Phi^{\mu'}_s$. Thus each component $A$ of $S\setminus(s\cup s^{\perp})$ determines a half-space $\Phi_A:=\Phi^{\mu}_s$ for $s$. Two components $A_1$ and $A_2$ of $S\setminus(s\cup s^{\perp})$ are \emph{compatible} if $\Phi_{A_1}=\Phi_{A_2}$. We will show that all the components of $S\setminus(s\cup s^{\perp})$ are compatible. Fixing $s\in S$, we shall divide these components into several classes and conduct a case analysis.

\subsection{Big components are compatible}

\begin{defn}
 A component $A$ of $S\setminus(s\cup s^{\perp})$ is \emph{big} if there is $a\in A$ not adjacent to $s$. Otherwise $A$ is \emph{small}.
\end{defn}

\begin{lem}
	\label{lem:big comp}
Any two big components are compatible.
\end{lem}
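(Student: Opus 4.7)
The plan is to argue by contradiction using the minimality of $\K(S)$: if $\Phi_{A_1}\neq\Phi_{A_2}$, I will construct an elementary twist in $\langle s\rangle\cong\mathbb Z_2$ (permitted since $S$ is $2$-rigid) that strictly lowers $\K_1$. Because each component $C$ of $S\setminus (s\cup s^\perp)$ has a well-defined half-space $\Phi_C$ of $\W_s$ and $\W_s$ has only two half-spaces, the components partition into $\mathcal A=\{C:\Phi_C=\Phi_{A_1}\}$ and $\mathcal B=\{C:\Phi_C=\Phi_{A_2}\}$, both nonempty. Setting $A=\bigcup\mathcal A$, $B=\bigcup\mathcal B$, the singleton $J=\{s\}$ weakly separates $S$ via $A\sqcup B$, giving an elementary twist $\tau$; let $S_\tau=\tau(S)$ and write $\Phi_A=\Phi_{A_1}$, $\Phi_B=\Phi_{A_2}$ for the opposite half-spaces.

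The next step is to classify each maximal spherical subset $\tilde J$ of $S$ relative to $s$. Since $S$ is of type $\mathrm{FC}$, if $s\notin\tilde J$ then maximality forces some $j\in\tilde J$ with $j$ not adjacent to $s$ (otherwise $\tilde J\cup\{s\}$ would be spherical). Sphericity prohibits elements of both $A$ and $B$, so $\tilde J\subseteq\{s\}\cup s^\perp\cup A$ or $\tilde J\subseteq\{s\}\cup s^\perp\cup B$. Applying the marking $((s,s),j)$ with $j$ not adjacent to $s$, I get $\W_j\subseteq\Phi_C$ where $C$ is the component of $j$; since $\W_j$ passes through $\sigma_{\tilde J}$, since $\sigma_{\tilde J}$ is connected, and since $\W_s$ does not pass through $\sigma_{\tilde J}$ (as $s\notin W_{\tilde J}$), I conclude $C_{\tilde J}\subset\Phi_C$. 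It remains to describe $\tau(\tilde J)$: if $s\in\tilde J$ then $\langle\tau(\tilde J)\rangle=W_{\tilde J}$ and so $C_{\tau(\tilde J)}=C_{\tilde J}$; if $s\notin\tilde J$ and $\tilde J\cap B=\emptyset$ then $\tau(\tilde J)=\tilde J$; if $s\notin\tilde J$ and $\tilde J\cap B\neq\emptyset$ then $\tau(\tilde J)=s\tilde J s$, so $C_{\tau(\tilde J)}=s.C_{\tilde J}\subset\Phi_A$.

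Finally, I compare $\K_1(S_\tau)$ and $\K_1(S)$ pair by pair. Pairs where both cells are stationary or both are translated by $s$ contribute equally. Pairs where $s\in\tilde J$ and $\tilde I$ moves satisfy $d(C_{\tilde J},sC_{\tilde I})=d(sC_{\tilde J},sC_{\tilde I})=d(C_{\tilde J},C_{\tilde I})$ by $s$-invariance of $C_{\tilde J}$. The only remaining pairs have $C_{\tilde J}\subset\Phi_A$ and $C_{\tilde I}\subset\Phi_B$ (neither containing $s$): any minimal gallery between them must cross $\W_s$, and reflecting the portion past its first crossing yields a strictly shorter gallery from $C_{\tilde J}$ to $sC_{\tilde I}$. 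Choosing $\tilde J_i$ to be a maximal spherical subset containing $a_i$ (where $a_i\in A_i$ is not adjacent to $s$, provided by bigness) gives such a pair, so $\K_1(S_\tau)<\K_1(S)$, contradicting minimality.

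The main obstacle is the geometric claim in the second paragraph that $C_{\tilde J}$ lies entirely in $\Phi_C$ when $s\notin\tilde J$ and $\tilde J$ intersects a component $C$: one must upgrade the information that a single wall $\W_j$ lies in $\Phi_C$ to the statement that the whole cell $\sigma_{\tilde J}$ does, and one must ensure via type $\mathrm{FC}$ and maximality that a $j\in\tilde J$ not adjacent to $s$ actually exists. This is where the type-$\mathrm{FC}$ hypothesis does genuine work in the present lemma.
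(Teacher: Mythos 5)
Your proposal is correct and follows essentially the same route as the paper: the same elementary twist in $\langle s\rangle$ determined by the two half-spaces, the same classification of maximal spherical subsets by whether they contain $s$ or meet $B$, and the same gallery-reflection argument showing $\K_1$ strictly drops. The only (immaterial) difference is that you sort the small components into $A$ and $B$ by their half-spaces, whereas the paper lumps them all into $A$; both choices yield a valid twist and the same contradiction.
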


\begin{proof}
We argue by contradiction and assume that the big components of $S\setminus(s\cup s^{\perp})$ can be divided into two non-empty families $\{A_k\}$ and $\{B_k\}$ such that all $\Phi_{A_k}$ coincide (call that half-space $\Phi_A$) and are distinct from all $\Phi_{B_k}$, which also coincide (call that half-space $\Phi_B$). Let $B$ be the union of all the $B_k$. Let $\tau$ be the elementary twist that sends each element $b\in B$ to $sbs$ and fixes other elements of $S$. For a contradiction, we will prove $\K_1(\tau(S))<\K_1(S)$.

Let $J\subset S$ be maximal spherical. $J$ is \emph{twisted} if it contains an element of~$B$ and $s\notin J$. A twisted $J$ exists, since we can take any maximal spherical $J$ containing $b\in B$ not adjacent to $s$. Note that if $J$ is twisted, then for each $j\in J$ we have $\W_{\tau(j)}=s\W_j$, and hence $C_{\tau(J)}=s.C_{J}$. Moreover, there is an element $b\in J\setminus\{s\}$ not adjacent to $s$, since otherwise $J\cup\{s\}$ would be spherical contradicting the maximality of $J$. Then $\Phi(\W_s,C_J)=\Phi(\W_s,\W_b)=\Phi_B$. 

Consider now maximal spherical $I\subset S$ that is not twisted. If $s\in I$, then $C_{\tau(I)}=s.C_I=C_{I}$. If $s\notin I$, then $I\cap B=\emptyset$, and we also have $C_{\tau(I)}=C_{I}$. As before, there exists such $I$ with $s\notin I$. Moreover, then there is $a\in I\setminus\{s\}$ not adjacent to $s$, and  $\Phi(\W_s,C_I)=\Phi(\W_s,\W_a)=\Phi_A$. 

Let $J,I\subset S$ be maximal spherical. If both $J$ and $I$ are twisted or both are not twisted, then $d(C_{J},C_{I})=d(C_{\tau(J)},C_{\tau(I)})$. Now suppose that $J$ is twisted and $I$ is not twisted. If $s\in I$, we still have $d(C_{J},C_{I})=d(C_{\tau(J)},C_{\tau(I)})$. If $s\notin I$, then since $\Phi_B\neq\Phi_A$, we have  $\Phi(\W_s,C_J)\neq \Phi(\W_s,C_I)$. Hence a minimal length gallery~$\beta$ from a chamber in $C_J$ to a chamber in $C_I$ has an edge dual to $\W_s$. Removing this edge from $\beta$ and reflecting $\beta\cap\Phi(\W_s,C_J)$ by $s$, we obtain a shorter gallery from a chamber in $s.C_J$ to a chamber in $C_I$.
Thus $d(C_{\tau(J)},C_{\tau(I)})=d(s.C_{J},C_{I})<d(C_{J},C_{I})$. Consequently $\K_1(\tau(S))<\K_1(S)$.
\end{proof}

\subsection{Exposed components}

\begin{defn}
	\label{defn:type I}
A small component $A$ is \emph{exposed} if there is $t\in A$ and $r$ inside a different component of $S\setminus(s\cup s^{\perp})$ such that 
$s$ and $r$ are in distinct components of $S\setminus(t\cup t^\perp)$.
\end{defn}

\begin{lem}
\label{lem:small comp}
If there exists an exposed component, then all components are compatible.
\end{lem}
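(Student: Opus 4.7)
I argue by contradiction. Assume not all components of $S\setminus(s\cup s^\perp)$ are compatible, and let $A$ be an exposed component with witnesses $t\in A$ and $r$ in a different component, such that $s$ and $r$ lie in distinct components of $S\setminus(t\cup t^\perp)$. The first observation is that the exposedness of $A$ directly gives that $\{t\}$ weakly separates $S$: letting $U$ be the component of $S\setminus(t\cup t^\perp)$ containing $s$ and $V$ the union of the other components (so $r\in V$), both are non-empty, and $V\subseteq S\setminus(s\cup s^\perp)$ since any $v\in V$ adjacent to $s$ in the defining graph would yield an edge between $s$ and $v$ within $S\setminus(t\cup t^\perp)$, contradicting $v\in V$.

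The plan is to construct an elementary twist $\tau$ with $\K(\tau(S))<\K(S)$, contradicting minimality. The principal candidate is the twist $\tau$ over $\{t\}$ with partition $U\sqcup V$, sending $v\mapsto tvt$ for $v\in V$ and fixing $U\cup\{t\}\cup t^\perp$. Call a maximal spherical $J\subseteq S$ \emph{twisted} if $t\notin J$ and $J\cap V\neq\emptyset$ (then $J\subseteq V\cup t^\perp$, since no spherical subset can mix $U$ and $V$); a direct computation gives $W_{\tau(J)}=tW_Jt^{-1}$ and hence $C_{\tau(J)}=t.C_J$ for twisted $J$, while $C_{\tau(J)}=C_J$ for untwisted $J$ (either $\tau|_J=\mathrm{id}$, or $t\in J$ and $tW_Jt^{-1}=W_J$). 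Consequently, $\K_1(\tau(S))-\K_1(S)$ reduces to the contributions of pairs $(J,I)$ with exactly one twisted. Pairs with $t\in I$ contribute zero since $t.C_I=C_I$; for the remaining pairs, with $t\notin I$ (so $I\cap V=\emptyset$ and, by maximality of $I$, $I\cap U\neq\emptyset$), the contribution is strictly negative iff $\W_t$ separates $C_J$ from $C_I$ in $\Da$.

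To identify such a pair, I follow the template of Lemma~\ref{lem:big comp}. For twisted $J$, maximality combined with the FC assumption forces some $v\in J\cap V$ not adjacent to $t$ (otherwise $J\cup\{t\}$ would be spherical), and one shows $C_J\subseteq\Phi(\W_t,\W_v)$ using Lemma~\ref{lem:spherical same side} and Corollary~\ref{cor:spherical conjugate}; analogously one finds $u\in I\cap U$ not adjacent to $t$ with $C_I\subseteq\Phi(\W_t,\W_u)$. The key remaining point is to show that $\Phi(\W_t,\W_v)$ and $\Phi(\W_t,\W_u)$ are opposite half-spaces of $\W_t$ in $\Da$, which is where the incompatibility of components enters: otherwise, a marking propagation via Proposition~\ref{lem:compactible1} or~\ref{lem:compactible2} applied to an irreducible non-spherical $P\subseteq S$ containing $\{s,t,u,v\}$ would already force the relevant components to be compatible, contradicting the assumption.

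The main obstacle will be the last step, namely rigorously linking the incompatibility of components of $S\setminus(s\cup s^\perp)$ to the oppositeness of the two half-spaces of $\W_t$ in $\Da$. This is subtle because in the reference complex $\Dr$ both $\W_u$ and $\W_v$ sit on the same ($c_0$-) side of $\W_t^{\mathrm{ref}}$, yet the twist across the $U/V$ divide can flip this relationship in the ambient complex $\Da$; the precise control should come from Lemma~\ref{lem:spherical same side} applied to the bases $(t,\{t,u\})$ and $(t,\{t,v\})$ and Corollary~\ref{cor:spherical conjugate}. If in a degenerate case $\K_1$ remains unchanged under $\tau$, the fallback is the secondary invariant $\K_2$ via the sets $E_{J,I}$ of Definition~\ref{def:E}, well-defined thanks to Proposition~\ref{prop:consistent}, which should then strictly decrease under the same twist, yielding the contradiction.
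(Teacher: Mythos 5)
Your overall instinct --- exploit the exposed structure to produce an elementary twist that strictly decreases the complexity --- is the right one, but the specific route has a fatal flaw and leaves the real work undone. The flaw is in the claimed $\K_1$ decrease. In the exposed situation the hypotheses of Lemma~\ref{double} are satisfied (with $J=\{s,t\}$), and its conclusion forces $U=\{s\}$: no element of $S\setminus(\{s,t\}\cup\{s,t\}^{\perp})$ is adjacent to $s$, so the component of $S\setminus(t\cup t^{\perp})$ containing $s$ is the singleton $\{s\}$, and $s$ is adjacent to $t$. Consequently every maximal spherical $I$ with $t\notin I$ must meet $V$: otherwise $I\subseteq \{s\}\cup t^{\perp}$, and since $s$ is adjacent to $t$ and $t^{\perp}$ commutes with $t$, the set $I\cup\{t\}$ is spherical by FC, contradicting maximality. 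So the "remaining pairs" you want to use (twisted $J$, untwisted $I$ with $t\notin I$) do not exist; every untwisted maximal spherical contains $t$, and for those $t.C_I=C_I$, so $\K_1(\tau(S))=\K_1(S)$ identically. Your fallback to $\K_2$ is therefore not a degenerate case but the only case, and you give no argument for it. Relatedly, the step you yourself flag as "the main obstacle" --- converting incompatibility of components of $S\setminus(s\cup s^{\perp})$ (a statement about half-spaces of $\W_s$) into oppositeness of half-spaces of $\W_t$ --- is precisely where the content of the lemma lies, and it is not resolved by Lemma~\ref{lem:spherical same side} alone.

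The paper's proof is structured quite differently and is worth contrasting. Lemma~\ref{double} first collapses the picture: the unique small component of $S\setminus(s\cup s^{\perp})$ is $\{t\}$, a maximal spherical contains $s$ iff it contains $t$, and all $D_J$ for non-exposed $J$ lie in a single component $\Lambda$ of $\Da\setminus\W_{\{s,t\}}$ (here 2-rigidity gives connectivity of $S\setminus(\{s,t\}\cup\{s,t\}^{\perp})$). If $\Lambda$ lies in the union $\Sigma$ of the two sectors for $\{s,t\}$, compatibility of $\{t\}$ with a big component is proved \emph{directly} --- no twist and no contradiction with minimality. Only when $\Lambda\not\subseteq\Sigma$ does one twist, and then one needs a \emph{composition} $\tau(w)$, $w\in\langle s,t\rangle$, of $\mathbb{Z}_2$-twists in $\langle s\rangle$ and $\langle t\rangle$ chosen so that $w\Sigma\supseteq\Lambda$; a single twist over $\{t\}$ cannot achieve this when $m_{st}$ is large. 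The decrease is then in $\K_2$, via a careful bookkeeping of how the sets $E_{J,I}$ transform under $\tau(w)$ for exposed versus non-exposed $J$, and notably this contradiction uses only $\Lambda\not\subseteq\Sigma$, not the assumed incompatibility. To repair your argument you would need to import Lemma~\ref{double}, abandon the $\K_1$ route, set up the $\Sigma$/$\Lambda$ dichotomy, and carry out the $E_{J,I}$ analysis --- that is, essentially the paper's proof.
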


\begin{proof}
Let $t$ and $r$ be as in Definition~\ref{defn:type I}. Note that $r$ is adjacent to neither $s$ nor~$t$. By Lemma~\ref{double}, none of the elements of $S\setminus(\{s,t\}\cup\{s,t\}^{\perp})$ is adjacent to $s$ or~$t$. It follows that there is only one small component of $S\setminus(s\cup s^{\perp})$, and this small component equals $\{t\}$.

Observe that a maximal spherical subset $J\subset S$ contains $s$ if and only if it contains $t$. Indeed, if say $s\in J$, then each element of $J\setminus\{s\}$ is adjacent to $s$. Hence $J\subseteq \{s,t\}\cup\{s,t\}^{\perp}$ by Lemma~\ref{double}. If $t\notin J$, then $J\cup \{t\}$ is spherical, which contradicts the maximality of $J$. We say that $J$ is \emph{exposed} if $\{s,t\}\subseteq J$.

Let $\W_{\{s,t\}}$ be the union of all the walls in $\Da$ for the reflections in the dihedral group $\langle s,t\rangle$. 
Since $S$ is $2$-rigid, the graph induced on $S\setminus(\{s,t\}\cup\{s,t\}^{\perp})$ is connected. Thus all the walls $\W_r$ for $r\in S\setminus(\{s,t\}\cup\{s,t\}^{\perp})$ lie in the same connected component $\Lambda$ of $\Da\setminus \W_{\{s,t\}}$. Consequently, all $D_J$ for $J$ not exposed lie in $\Lambda$. Let $\Sigma\subset \Da$ be the union of the two sectors of the form $\Phi_s\cap \Phi_{t}$ for $\{\Phi_s, \Phi_{t}\}$ geometric. Assume first $\Lambda\subset\Sigma$. Then $\Phi(\W_s,\Lambda)=\Phi(\W_s,t\Lambda)$, hence $\Phi(\W_s,\W_r)=\Phi(\W_s,t\W_r)$. These half-spaces correspond to markings $\mu=((s,t),r)$ with $K_\mu=\{t\}$ and $\mu'=(s,r)$ with $K_{\mu'}=\{r\}$. 
Consequently, the unique small component $\{t\}$ of $S\setminus(s\cup s^{\perp})$ is compatible with a big component. In view of Lemma~\ref{lem:big comp}, all the components are compatible. It remains to consider the case $\Lambda\not \subset\Sigma$.

Let $\tau_s$ (resp.\ $\tau_t$) be the elementary twist that sends $t$ to $sts$ (resp.\ $s$ to $tst$) and fixes other elements of $S$. 
For any $w\in\langle s,t \rangle$, composing appropriately $\tau_s$ and $\tau_t$ (while keeping the notation $s,t$ for the images of $s,t$ under the twist), we obtain $\tau=\tau(w)$ sending $s$ to $wsw^{-1}$, $t$ to $wtw^{-1}$ and fixing other elements of $S$. We will justify the following.
\begin{enumerate}
\item $\W_{\tau(s)}=w\W_s$ and $\W_{\tau(t)}=w\W_t$;
	\item if $J$ is maximal spherical that is exposed (resp.\ not exposed), then $D_{\tau(J)}=w.D_J$ (resp.\ $D_{\tau(J)}=D_J$);
	\item if $J$ and $I$ are both exposed (resp.\ not exposed), then $E_{\tau(J),\tau(I)}=w.E_{J,I}$ (resp.\ $E_{\tau(J),\tau(I)}=E_{J,I}$);
	\item if $J$ is exposed and $I$ is not exposed, then $E_{\tau(J),\tau(I)}=w.E_{J,I}$ and $E_{\tau(I),\tau(J)}=E_{I,J}$.
\end{enumerate}
Here (1) is immediate and implies (2), while (3) follows from (2) and Definition~\ref{def:E} (note that an elementary twist does change the defining graph, so it does not change the good subsets of $J$ and $I$). Now we prove (4). Note that for each $j\in J$, we have $\W_j\cap\W_{\tau(j)}\neq\emptyset$. Moreover, $\tau$ fixes each element of $I$. Thus for non-adjacent $i\in I$ and $j\in J$, the walls $\W_j$ and $\W_{\tau(j)}$ are in the same half-space for $i=\tau(i)$. Hence it follows from Definition~\ref{def:E} that $E_{\tau(I),\tau(J)}=E_{I,J}$. It remains to verify the first equality of (4). Note that the elements of $J\setminus\{s,t\}$ are fixed by $\tau$, and $\{s,t\}\subset J$ is maximal irreducible that is not good in view of Definition~\ref{defn:type I} and Lemma~\ref{double}. Thus $E_{\tau(J),\tau(I)}=D_{\tau(J)}=w.D_J=w.E_{J,I}$, finishing the proof of (4). 

Coming back to the case $\Lambda\not\subset\Sigma$, choose $\tau=\tau(w)$ a composition of twists as above so that $w\Sigma$ contains $\Lambda$. We will reach a contradiction by showing $\K_1(\tau(S))=\K_1(S)$ and $\K_2(\tau(S))<\K_2(S)$. The equality follows from the fact that for any maximal spherical $J\subset S$, we have $C_{\tau(J)}=C_J$. Now we verify the inequality. Consider maximal spherical subsets $J,I
\subset S$. If both $J$ and $I$ are exposed or both are not exposed, then by (3) we have $d(E_{\tau(J),\tau(I)},E_{\tau(I),\tau(J)})=d(E_{J,I},E_{I,J}).$ 

Now we assume that $J$ is exposed but $I$ is not exposed. Let $\beta$ be a shortest gallery from a chamber $y\in E_{I,J}$ to a chamber $x\in E_{J,I}$. 
By angle-compatibility, $\{s,t\}$ is conjugate to $\{s',t'\}\subset S'$. 
By Theorem~\ref{lem:residue}, we can assume that $\beta$ is a concatenation of galleries $\beta'$ and $\beta''$, where $\beta'$ is a minimal gallery from $y$ to some chamber (call it $x'$) in the $\{s',t'\}$-residue $\mathcal R$ containing $x$. Furthermore, $\beta'\subset\Lambda$. Note that $x\neq x'$ since $\Lambda\nsubseteq\Sigma$. 

We have $x'=w.x$ or $x'=w.x_{\mathrm{ant}}$, where $x_{\mathrm{ant}}$ is the chamber antipodal to $x$ in~$\mathcal R$. Note that $x_{\mathrm{ant}}\in E_{J,I}$, since $\{s,t\}$ is an irreducible component of $J$ that is not good with respect to $I$. Thus from (4) we deduce $x'\in E_{\tau(J),\tau(I)}$ and $y\in E_{\tau(I),\tau(J)}$. Consequently $d(E_{\tau(J),\tau(I)},E_{\tau(I),\tau(J)})<d(E_{J,I},E_{I,J})$, giving $\K_2(\tau(S))<\K_2(S)$.
\end{proof}

\subsection{Non-exposed small components} To prove Theorem~\ref{thm:minimal complexity},
it remains to consider the case where all components of $S\setminus(s\cup s^{\perp})$ are big, or small and not exposed.  
We argue by contradiction and assume that the components of $S\setminus(s\cup s^{\perp})$ can be divided into two non-empty families $\{A_k\}$ and $\{B_k\}$ such that all $\Phi_{A_k}$ coincide and are distinct from all $\Phi_{B_k}$, which also coincide. Let $A$ (resp.\ $B$) be the union of all $B_k$ (resp.\ $A_k$).  By Lemma~\ref{lem:big comp}, we can assume that all the big components (if they exist) are in $A$. 
Let $\tau$ be the elementary twist that sends each element $b\in B$ to $sbs$ and fixes other elements of $S$. By the proof of Lemma~\ref{lem:big comp}, we have $\K_1(S)=\K_1(\tau(S))$. For a contradiction, we will prove $\K_2(\tau(S))<\K_2(S)$.

Let $J\subset S$ be a maximal spherical subset. $J$ is \emph{twisted} if it contains an element of $B$. In that case, $s$ is adjacent to each element in $J$ since $B$ is a union of small components. Consequently $J\cup\{s\}$ is spherical so $s\in J$ by the maximality of $J$. 

Consider maximal spherical subsets $J$ and $I$. If both of them are twisted or both are not-twisted, then we have 
\begin{equation}
\label{eq:equal}
d(E_{\tau(J),\tau(I)},E_{\tau(I),\tau(J)})=d(E_{J,I},E_{I,J}).
\end{equation}
Now we assume that $J$ is twisted and $I$ is not twisted. If $I\subseteq \{s\}\cup \{s\}^{\perp}$, then
\eqref{eq:equal} holds as well. It remains to discuss the case where $I\nsubseteq s\cup s^{\perp}$. We will prove $d(E_{\tau(J),\tau(I)},E_{\tau(I),\tau(J)})<d(E_{J,I},E_{I,J})$, which implies $\K_2(\tau(S_0))<\K_2(S_0)$ and finishes the proof of
Theorem~\ref{thm:minimal complexity}.
 
\smallskip \noindent \textbf{Case 1: $I$ contains $s$}. 
In that case, pick $r\in I\setminus(s\cup s^{\perp})$. Let $I_1\subseteq I$ be maximal irreducible containing $r$. Then $s\in I_1$, since $s$ and $r$ do not commute. Pick $t\in J\setminus(s\cup s^{\perp})$. Let $J_1\subseteq J$ be maximal irreducible containing $t$. Then $s\in J_1$. 
Since both $t$ and $r$ are adjacent to $s$, we have that $t\in J_1$ is good with respect to~$r$, and $r\in I_1$ is good respect to~$t$. 

We first justify that $E_{J,I}$ and $E_{I,J}$ lie in distinct half-spaces for $s$. 
Otherwise, $\{r,s,t\}$ is geometric. In particular, we have $\Phi(\W_s,t\W_r)=\Phi(\W_s,r\W_t)$.
These half-spaces correspond to markings $\mu=((s,t),r)$ with $K_\mu=\{t\}$ and $\mu'=((s,r),t)$ with $K_{\mu'}=\{r\}$. This contradicts the assumption that $t$ and $r$ belong to incompatible components.

We have $D_{\tau(J)}=s.D_J$. Note that $\tau$ fixes all the elements of $I$ and $J\setminus J_1$, and hence $E_{\tau(J),\tau(I)}=s.E_{J,I}$
in view of  $$\Phi(s\W_t,\W_r)=\Phi(s\W_t,\W_r\cap\W_s)=s\Phi(\W_t,\W_r\cap\W_s)=s\Phi(\W_t,\W_r).$$
On the other hand, we have $E_{\tau(I),\tau(J)}=E_{I,J}$, since $\W_j\cap\W_{\tau(j)}\neq\emptyset$ for each $j\in J$, and hence $\W_j$ and~$\W_{\tau(j)}$ are in the same half-space for $i=\tau(i)\in I$ not adjacent to $j$.

To conclude Case 1, pick a gallery $\beta$ of minimal length from $x\in E_{J,I}$ to $y\in E_{I,J}$. Since chambers $x$ and $y$ lie in distinct  half-spaces for $s$ and $x$ is incident to $\W_s$, we can assume that the first edge of $\beta$ is dual to $\W_s$ (Theorem~\ref{lem:residue}). Since $s.x\in s.E_{J,I}=E_{\tau(J),\tau(I)}$ and $y\in E_{I,J}=E_{\tau(I),\tau(J)}$, we have $d(E_{\tau(J),\tau(I)},E_{\tau(I),\tau(J)})<d(E_{J,I},E_{I,J})$, as desired.

\smallskip \noindent \textbf{Case 2: $I$ contains an element not adjacent to $s$}. Let this element be $r$. 
Let $t$ and $J_1$ be as in Case~1. Since $t$ is inside a non-exposed small component, $t\in J_1$ is good with respect to $r$. In particular, $J_1$ is good with respect to $I$. 

Let $\Sigma\subset \Da$ be the union of the two sectors of the form $\Phi_s\cap \Phi_{t}$ for $\{\Phi_s, \Phi_{t}\}$ geometric. We first justify $\W_r\subset s\Sigma$.
Indeed, note that $\W_r$ is disjoint from any wall in $\W_{\{s,t\}}$. Since $s$ and $r$ are in the same component of $S\setminus(t\cup t^{\perp})$, we have $(t,r)\sim ((t,s),r)$ by Proposition~\ref{lem:compactible1} and the fact that $S$ is 2-rigid. Thus $\Phi(\W_t,\W_r)=\Phi(\W_t,s\W_r)$. It follows that 
$\W_r\subset \Sigma\cup s\Sigma$. Now recall that $t\in B$ and $r\in A$, thus $\Phi(\W_s,\W_r)\neq\Phi(\W_s,t\W_r)$ by the incompatibility of $A$ and $B$. It follows that $\W_r\subset\Sigma$ is not possible, justifying $\W_r\subset s\Sigma$.

Let $\Lambda$ be the sector of $\Sigma$ satisfying $\W_r\subset s\Lambda$. It follows that $E_{J,I}\subset \Lambda$ and $E_{\tau(J),\tau(I)}\subset s\Lambda$. Consequently $E_{\tau(J),\tau(I)}=sE_{J,I}$. We also have $E_{\tau(I),\tau(J)}=E_{I,J}$ as in Case~1. Note that $E_{I,J}$ and $E_{J,I}$ are in distinct half-spaces for $s$. Now we can prove $d(E_{\tau(J),\tau(I)},E_{\tau(I),\tau(J)})<d(E_{J,I},E_{I,J})$ in the same way as in Case 1.

\bibliography{1}
\bibliographystyle{plain}

\end{document}